\newtheorem{Ex}{Example}[section]
\def\subsection{\@startsection{subsection}{2}%
  \z@{.5\linespacing\@plus.7\linespacing}{.1\linespacing}%
  {\normalfont\normalsize\bfseries}}
\newcommand{\weight}{w}
\newcommand{\veps}{\varepsilon}
\newcommand{\tot}{\mathrm{tot}}
\newcommand{\G}{\mathrm{G}}
\newcommand{\R}{\mathbb{R}}
\newcommand{\N}{\mathbb{N}}
\newcommand{\C}{\mathscr{C}}
\newcommand{\D}{\mathcal{D}}
\newcommand{\probClass}{\mathcal{P}(a,b)}
\newcommand{\probClassi}{\mathcal{P}(a_i,b_i)}
\newcommand{\funClass}{\C^{0,1}_{+}(a,b)}
\newcommand{\funClassTwo}{\C^{1,2}_{*}(a,b)}
\newcommand{\funClassTwoi}{\C^{1,2}_{*}(a_i,b_i)}
\newcommand{\funClassWeight}{\mathcal{W}(a,b)}
\newcommand{\funClassWeighti}{\mathcal{W}(a_i,b_i)}
\newcommand{\muref}{\Tilde{\mu}}
\newcommand{\weightId}{\weight_{\text{lin}}}
\newcommand{\weightG}{\weight_{\text{G}}}
\newcommand{\weightIdi}{\weight_{i,\text{lin}}}
\newcommand{\weightGi}{\weight_{i,\text{G}}}
\newcommand{\weightUi}{\weight_{i,\text{U}}}
\newcommand{\weightSTIMi}{\widehat{\weight_{i}}}
\newcommand{\DGSMi}{\nu_{i,\weight_i}}
\newcommand{\gId}{g_{\text{lin}}}
\newcommand{\fSTIMi}{\widehat{f_i}}
\newcommand{\nn}{{(n)}}
\newcommand{\kk}{{(k)}}
\newcommand{\br}{\,|\,}
\newcommand{\brr}{\,\bigr|\,}
\def\abs#1{\left| #1 \right|}
\def\Var{\mathop{\rm Var}\nolimits}
\newcommand{\Esp}{\mathbb{E}}
\def\Rnot#1{{\fontfamily{qcr}\selectfont #1}}
\newcommand{\PCE}{PoinCE\xspace}
\newtheorem{theo}{Theorem}[section]
\newtheorem{Def}{Definition}[section]
\newtheorem{prop}{Proposition}[section]
\numberwithin{equation}{section}
\newcommand\blfootnote[1]{%
  \begingroup
  \renewcommand\thefootnote{}\footnote{#1}%
  \addtocounter{footnote}{-1}%
  \endgroup
}
\newcommand{\mathsubsection}[2]{
  \subsection*{#1 \texorpdfstring{$#2$}{}}
  \addtocontents{toc}{\protect\setcounter{tocdepth}{1}}
  \leavevmode
}
\title[Weighted Poincar\'e inequalities for Global Sensitivity Analysis]{On one dimensional weighted Poincar\'e inequalities for Global Sensitivity Analysis}
\author{David Heredia, Ald\'eric Joulin, Olivier Roustant}
\date{}
\begin{document}
\maketitle

\begin{abstract}
One-dimensional Poincar\'e inequalities are used in Global Sensitivity Analysis (GSA) to provide derivative-based upper bounds and approximations of Sobol indices. We add new perspectives by investigating weighted Poincar\'e inequalities. Our contributions are twofold.
In a first part, we provide new theoretical results for weighted Poincar\'e inequalities, guided by GSA needs.
We revisit the construction of weights from monotonic functions, providing a new proof from a spectral point of view. In this approach, given a monotonic function $g$, the weight is built such that $g$ is the first non-trivial eigenfunction of a convenient diffusion operator. This allows us to reconsider the ``linear standard'', \textit{i.e.} the weight associated to a linear $g$. In particular, we construct weights that guarantee the existence of an orthonormal basis of eigenfunctions, leading to approximation of Sobol indices with Parseval formulas. In a second part, we develop specific methods for GSA. We study the equality case of the upper bound of a total Sobol index, and link the sharpness of the inequality to the proximity of the main effect to the eigenfunction. This leads us to theoretically investigate the construction of data-driven weights from estimators of the main effects when they are monotonic, another extension of the linear standard. Finally, we illustrate the benefits of using weights on a GSA study of two toy models and a real flooding application, involving the Poincar\'e constant and/or the whole eigenbasis.
\end{abstract}

\tableofcontents

\section{Introduction}

\blfootnote{\textit{2020 Mathematics Subject Classification.} 26D10, 39B62, 37A30, 60J60, 62G05, 62P30, 65C60.  

\textit{Key words and phrases.} Weighted Poincar\'e inequality, spectral gap, Global Sensitivity Analysis, Sobol-Hoeffding decomposition, Sobol indices, weighted Derivative-based Global Sensitivity Measures, Poincar\'e chaos expansion.}

\subsection{Motivation} 
The development of Global Sensitivity Analysis (GSA) of numerical model outputs has become increasingly popular in the three last decades. It is by now an essential international research topic, combining modern mathematical and statistical tools to computer experiments, and has many consequences in engineering for industry. Recall that the principle of GSA is to quantify the influence of input random variables on the output of a multivariate function $f: \R^d \to \R$ which might be expensive to evaluate since dimension $d$ is large. These variables can represent calculation codes that model complex phenomena or artificial intelligence algorithms whose functioning is not well understood.

When it comes to quantifying influence or uncertainty, practitioners tend to use Sobol indices \cite{sobol1993,sobol2001} because of their clear interpretation in terms of ANOVA decomposition, at least under the assumption of independent entries. 
More precisely, they are defined according to the Sobol Hoeffding decomposition
\begin{equation*}
    f(X) = \sum_{I\subset \{1,\dots,d\}}f_{I}(X_I),
\end{equation*}
where $X = (X_1,\ldots, X_d)$ is the $d$-dimensional random vector of independent inputs $X_i$ and provided the output random variable $f(X)$ lies in $L^2$. Above each $X_I$ is the random vector formed by the variables $X_i$ with $i\in I$ and the terms $f_I(X_I)$ are uniquely characterized by the non-overlapping property
\begin{equation*}
    \Esp[f_I(X_I) \br X_J]=0, \quad \mbox{for all }J\subsetneq I,
\end{equation*}
where by convention $\Esp[\, \cdot \, |X_J] = \Esp[\, \cdot \, ]$ when $J=\emptyset$. Such conditions imply the orthogonal decomposition of the total variance
\[
\Var(f(X))=\sum_{I\subset \{1,\dots,d\}} \Var(f_I(X_I)).
\]
In particular, the main effects $f_i(X_i)$ carry the influence of each variable individually and the total contributions are captured by the total effects 
$$
f_i^\tot(X) = \sum_{I\ni i}f_I (X_I).
$$ 
The latter naturally defines total Sobol indices as the percentage of variance explained by them:
\begin{equation*}
    S_i^\tot = \frac{\Var(f_i^\tot(X))}{\Var(f(X))} \, \in \, [0,1].
\end{equation*}
Despite their clear interpretability,
the estimation of total Sobol indices requires numerous calculations, making them an expensive computational tool. 
When the derivatives of $f$ are available, other sensitivity indices called DGSM (Derivative-based Global Sensitivity Measure) reveal to be efficient since they are cheaper to compute, cf. \cite{SobolKucherenko2009, SobolKucherenko2010}. As observed in \cite{lamboni} and further studied in detail in \cite{poincareintervals}, Sobol indices
and DGSM are connected by a one-dimensional Poincar\'e inequality. In other words, when it is satisfied, such a functional inequality provides an upper bound of the variance-based index by using the derivative-based one, easier to handle in practice. Hence DGSM indices can be seen as a credible alternative to Sobol indices for screening purposes, allowing to identify input variables with minimal influence when the devoted DGSM indices are sufficiently small (balanced with the other components appearing in the upper bound). To apply these techniques, providing some information on the Poincar\'e constant, \textit{i.e.}, the best constant in the Poincar\'e inequality, 
is of crucial importance. Since in theory it is quite hard to find explicitly those objects even in our one-dimensional context, some numerical methods are required. As presented in \cite{poincareintervals}, they are mainly based on a combination of the spectral interpretation of the Poincar\'e constant together with an appropriate finite element discretization which is relevant in the context of small dimension.

Actually, there is absolutely no reason to limit ourselves to bound from above Sobol indices only by DGSM ones, at least for two reasons. On the one hand there exists some usual probability measures which do not satisfy the usual Poincar\'e inequality (\textit{e.g.}, heavy-tailed distributions) and on the other hand the DGSM-based upper bounds might be too large. To overcome this difficulty, we shall use another functional inequality involving the variance and alternative quantities still constructed with respect to the derivatives of the function $f$. Hence we are naturally led to introduce appropriate weight functions $w_i$ in the DGSM indices. It gives rise to the notion of weighted DGSM indices
\begin{equation*}
    \DGSMi = \Esp\left[\weight_i(X_i)\left( \frac{\partial f}{\partial x_i}(X) \right)^2\right] ,
\end{equation*}
provided the expectation makes sense, the univariate dependence of the chosen weights being natural within the present one-dimensional context. Such functional inequalities are called weighted Poincar\'e inequalities and yield to the key upper bound
\begin{equation*}
    S_i^\tot\leq C_P(\mu_i,\weight_i) \, \frac{\DGSMi}{\Var (f(X))} ,
\end{equation*}
where $\mu_i$ stands for the distribution of the input random variable $X_i$ and the Poincar\'e constant $C_P(\mu_i,\weight_i)$ depends on the weight $w_i$ (the unweighted case corresponding to the classical choice $\weight_i\equiv 1$). Hence it provides an additional degree of freedom by choosing conveniently the weight to enhance the precision of the upper bound. In particular it suggests, among other things, the construction of data-driven weights in order to improve the classical (unweighted) results offered in \cite{poincareintervals} when applied to specific models arising in the GSA methodology.

To conclude with motivations, notice that this framework is not limited to provide only upper bounds on Sobol indices. Indeed, under slightly additional assumptions, we may consider the spectral interpretation related to the weighted Poincar\'e inequalities, giving rise to the so-called Poincar\'e chaos, somewhat similar to that emphasized in \cite{Chaos2,PoincareChaos}. See also \cite{Adcock2007} for another usage of spectral expansions through Sturm-Liouville operators. Then, all Sobol indices can be expressed as Parseval identities and truncations of these identities give relevant derivative-based lower bounds on those indices. Here also, considering a weight in Poincar\'e inequalities provides an additional degree of freedom that should help to improve these lower bounds.

To illustrate the strength of weighted Poincar\'e inequalities in GSA, we provide in Figure \ref{fig: intro} some numerical computations of total Sobol indices and associated estimated bounds on a GSA standard model (we refer to Section \ref{Applications} for more details). We can see that, compared to the results of the unweighted case reported in \cite{poincareintervals}, using weights clearly improves the accuracy of the upper and lower bounds.
\begin{figure}[ht]
\begin{subfigure}{.5\textwidth}
  \centering
  \includegraphics[width=1\linewidth]{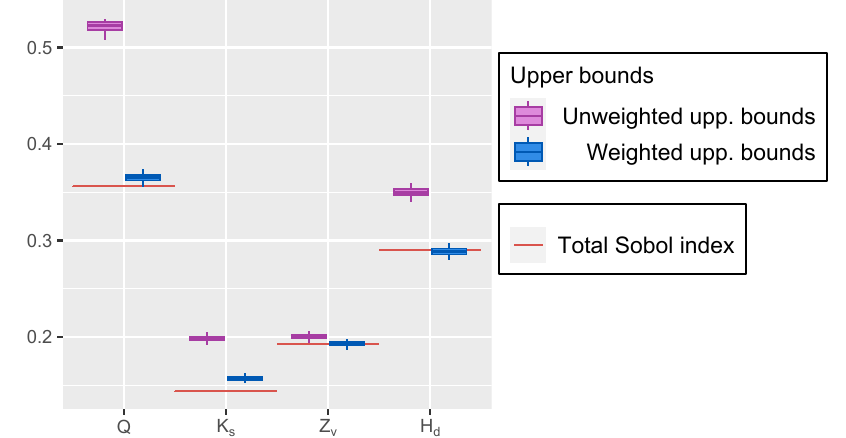}
\end{subfigure}%
\begin{subfigure}{.5\textwidth}
  \centering
  \includegraphics[width=1\linewidth]{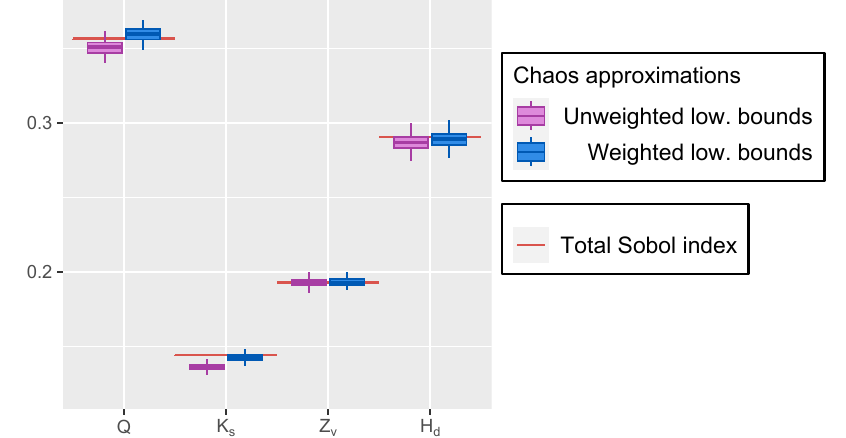}
\end{subfigure}
\captionsetup{margin={0cm, 0cm},justification=justified,singlelinecheck=false}
  \caption{
  Illustration of the benefits of using weights in Poincar\'e inequalities for bounding total Sobol indices on the hydrological problem of Section \ref{Application to a flood model}.}
\label{fig: intro}
\end{figure}

\subsection{Contributions and plan}
The present work is divided into two main parts: first we provide new theoretical results for  weighted Poincar\'e inequalities, driven by GSA needs, whereas in a second time we develop specific applications to GSA. Finally we illustrate numerically our results on toy models to observe the relevance of our approach and in particular on a flood model. Here is below a detailed description of our main contributions together with a plan of the paper. 

Section 2 offers an introduction to theoretical aspects of weighted Poincar\'e inequalities and their corresponding spectral interpretation. Then our main contributions on these weighted functional inequalities are contained in Section \ref{sect:main}. In a nutshell, they can be summarized as follows. \\ 
Firstly, we revisit the construction of weights presented in \cite{germain_swan}, offering a new proof from a spectral point of view and without requiring to the formalism of Stein's method. In addition, we provide a necessary and sufficient condition for obtaining non-vanishing weights. In this construction, given some suitable monotonic function $g_i$, we determine the weight $w_i$ for which $g_i$ is the first non-trivial eigenfunction of a convenient diffusion operator involving $w_i$ as diffusion constant. As a result, $g_i$ is essentially the only function to saturate the weighted Poincar\'e inequality. Furthermore, we provide a numerical method to approximate the weight $\weight_i$ when it does not admit a closed-form expression, a situation encountered in GSA when the distribution of the input variables are non-standard. \\ 
Secondly, we further investigate the common weight choice associated to the case where $g_i$ is linear. This case was studied for GSA in \cite{Song} by means of a calculus of variations approach (Euler-Lagrange equations). We expand their list of weights by including several examples related to truncated or heavy-tailed probability measures. Beyond this standard, we propose a new way to construct non-vanishing weights, based on a reference probability measure. \\ 
Thirdly, we consider the case where the weighted Poincar\'e inequality is not saturated, \textit{i.e.} when the first eigenfunction does not exist. Thanks to the intertwining approach proposed in \cite{bj,bjm}, we are able to provide the exact value of the Poincar\'e constant for original and new examples involving a large class of weights. \\ 
In Section \ref{sect:GSA_link}, we develop GSA-oriented results and in particular data-driven weights. We study the equality case of the upper bound of a total Sobol index, linking the sharpness of the inequality to the proximity of the main effect $f_i$ with respect to the eigenfunction $g_i$. This justifies the common weight choice associated to a linear $g_i$ for models that exhibit almost linear main effects. But this also suggests the construction of data-driven weights from estimators of the main effects when they are monotonic (and not only linear). In some sense, this is a data-driven version of the general weight proposed in Section \ref{sect:main}. We prove the consistency of this weight estimator and the associated upper bound. \\ 
Our final Section \ref{sect:Appli} is dedicated to applications. We collect all the previous material to study two toy models and a real flooding application, involving the Poincar\'e constant and/or the whole eigenbasis. In particular our numerical results on the flood model reveal to be relevant and exhibit a serious improvement of the ones established in \cite{poincareintervals} through the usual (\textit{i.e.} unweighted) Poincar\'e inequalities and Poincar\'e chaos.  

To conclude this introduction, we point out that our approach is limited to one-dimensional functional inequalities for the moment but it would be very natural to generalize our results to higher dimensions. A nice first result was presented
in the recent article \cite{cui2024optimal} in which the authors generalize the Stein approach to some multi-dimensional log-concave probability measures such as moment measures of convex functions. In some sense this work can be seen as a multi-dimensional extension of \cite{germain_swan} by means of Stein kernel weights, and also of \cite{Song} although the methods emphasized are somewhat different. Nevertheless this generalization to higher dimension remains open in full generality and would have many challenging and interesting consequences from a GSA perspective since then high-dimensional independent inputs $X_i$ with non necessarily independent coordinates could be addressed. We hope that such a direction will be the matter of future research. 

\section{Background on weighted Poincar\'e inequalities}
\label{Spectral interpretation}
We start by briefly introducing some theoretical aspects about one-dimensional weighted Poincar\'e inequalities, which are the main protagonists of the present paper.

Given $-\infty\leq a<b \leq \infty$, we denote by $\funClass$ the set of functions that are continuous and piecewise $\C^1$ (\textit{i.e.} continuously differentiable) on $[a,b]$ and positive on $(a,b)$. We define $\funClassTwo$ as the class of 
functions $f$ such that $f'\in \funClass$ or $-f'\in \funClass$. We denote $\probClass$ the set of probability measures $\mu$ on $[a,b]$ with density (with respect to the Lebesgue measure) $\rho\in \funClass$ satisfying $\rho (a) >0$ (resp. $\rho (b) >0$) when $a$ (resp. $b$) is finite.
Let $\funClassWeight$ be the set of continuous functions on $[a,b]$, that are piecewise $\C^1$ and positive on $(a,b)$.
In the sequel we systematically refer to functions $w\in \funClassWeight$ as weights.
Note that $\funClassWeight$ differs from $\funClass$ since we do not require weights to be differentiable at the boundaries.
In the case where $a$ and/or $b$ are infinite, we adopt the convention $[-\infty,b]=(-\infty,b]$ and/or $[a,\infty]=[a,\infty)$, to avoid making the distinction every time. 
It is of course possible to consider a more general setting, but this one is relevant and fulfills our purposes.

Let $L^2(\mu)$ be the space of square-integrable functions with respect to some probability measure $\mu \in \probClass$ and, given some weight function $w\in \funClassWeight$, denote $H^1(\mu,\weight)$ the weighted Sobolev space defined as
\[
H^1(\mu,\weight)=\left\{f\in L^2(\mu)\brr \weight^{1/2} f'\in L^2(\mu)\right\},\]
where $f'$ stands for the weak derivative of the function $f$. We are now in position to give the definition of a weighted Poincar\'e inequality.
\begin{Def}
A probability measure $\mu \in \probClass$ satisfies a weighted Poincar\'e inequality with weight function $\weight \in \funClassWeight$ and constant $C>0$ if for every function $f\in H^1(\mu,\weight)$ such that $\int_a^b f \, d\mu =0$ (we say that $f$ is centered), it is true that
    \begin{equation}
    \label{eq: poincar\'e}
    \scaleobj{1.0}{
        \int_{a}^b f^2 \, d\mu 
\leq C\int_a^b \weight \, (f')^2 \, d\mu .}
\end{equation}
The Poincar\'e constant, denoted $C_P(\mu,\weight)$, is the optimal (\textit{i.e.}, the smallest) constant $C$ for which (\ref{eq: poincar\'e}) holds. If there exists some non-null function which realizes the equality in (\ref{eq: poincar\'e}), we say that it saturates the weighted Poincar\'e inequality.
\end{Def}
In the particular case where $\weight\equiv 1$, we are reduced to the classical Poincar\'e inequality which has been largely studied in the literature. See for instance \cite{BGL} for an introduction to the topic, with precise references and credit.

Actually, such an idea to consider weighted functional inequalities of Poincar\'e type takes roots at least in the 70s within the pioneer work of Brascamp and Lieb \cite{brascamp_lieb} in a multi-dimensional log-concave context. Moreover it reveals to have strong consequences in high-dimensional analysis, in connection with other important functional inequalities, isoperimetry and concentration of measure, cf. for instance the work of Bobkov and Ledoux \cite{bob_ledoux} about general convex measures including heavy-tailed distributions, \textit{i.e.}, probability distributions whose tails decay is slower than exponential. In the one-dimensional case, the analysis can be further explored either by considering Hardy-type inequalities and Sturm-Liouville equations \cite{ bobkov_gotze1, bobkov_gotze2,muckenhoupt} or using Stein's method as in the papers \cite{germain_swan,saumard} in which the weight corresponds to the so-called Stein kernel. Recently, new theoretical guarantees have been proposed in \cite{bj,bjm} by means of the intertwining technique. Such an approach will be developed in Section \ref{sect:intert}.  

Similarly to the usual one $C_P(\mu,1)$, the Poincar\'e constant $C_P(\mu,\weight)$ admits a dual interpretation related to a convenient diffusion operator. To observe this, we need to introduce a bit of structure. Denote $\C^\infty(a,b)$ the space of infinitely differentiable real-valued functions on $[a,b]$ and consider the subspace
$$
\C_{N,w}^\infty(a,b)=\left\{f\in \C^\infty(a,b)\br \weight(a)f'(a)\rho(a) =\weight(b)f'(b)\rho(b) =0\right\} ,
$$
the presence of the index $N$ standing for Neumann boundary conditions adapted to the presence of the weight $w$  (these are the only boundary conditions we will consider throughout this paper). If $a$ and/or $b$ are infinite, the boundary conditions above are understood as taking the limit $a \to -\infty$ and/or $b\to \infty$. Then the diffusion operator of interest is defined on $\C_{N,w}^\infty(a,b)$ 
as follows: 
\begin{equation*}
    \label{eq: diffusion operator}
    L_w f=\frac{1}{\rho}\left(\weight f' \rho\right)' = w f'' + \left( w' + w (\log \rho )' \right) \, f' . 
\end{equation*} Note that this operator involves the weight $w$ as diffusion constant, in contrast to the canonical operator usually related to the probability measure $\mu$, \textit{i.e.}, the one constructed with the choice $w \equiv 1$. Trivial integrations by parts tell us that the operator $-L_w$ is symmetric on $\C_{N,w}^\infty(a,b) \subset L^2 (\mu)$ and non-negative, \textit{i.e.}, 
for every $f , g \in \C^\infty_{N,w} (a,b)$,
\begin{equation*}
\label{eq: symmetry}
\int_a^b (-L_w f) \, g\, d\mu = \int_a^b \weight \, f' \, g' \, d\mu = \int_a^b f \, (-L_w g) \, d\mu , 
\end{equation*}
and 
\begin{equation}
\label{eq: positivity}
\int_a^b (-L_w f) \, f\, d\mu = \int_a^b \weight \, (f' )^2 \, d\mu \geq 0,  
\end{equation}
respectively. If $[a,b]$ is finite and 
$w$ does not vanish at the boundary, the operator is said to be regular according to the formalism of Sturm-Liouville problems and it is essentially self-adjoint in $L^2 (\mu)$, \textit{i.e.}, it admits a unique self-adjoint extension (still denoted $-L_w$) with domain $\D ( -L_w) \subset L^2 (\mu)$ in which the space $\C^\infty_{N,w} (a,b)$ is dense for the operator norm. Otherwise the operator is called singular and to ensure this essentially self-adjointness property, the metric induced by the operator (or rather by the so-called \textit{carr\'e du champ} operator, thus by $w$) is assumed to be complete. We refer to \cite{Zettl} for a classical reference about Sturm-Liouville problems and to Chapter 3 of \cite{BGL} for the essentially self-adjointness property studied in the context of general diffusion Markov triples.

Once the (unique) self-adjoint extension is defined, let us introduce some elements about the eigenvalues and eigenfunctions of the operator $-L_w$. Roughly speaking, the eigenvalues correspond to a part of the spectrum $\sigma(-L_w) \subset \R^+$ given by the values $\lambda$ for which there exists a centered function $g \in \D (-L_w)$ satisfying $-L_w g=\lambda g$, \textit{i.e.}, an eigenfunction associated to $\lambda$. Note that our definition of eigenvalue differs a bit from the usual definition since $\lambda$ may not be isolated. In the present context, the first eigenvalue is $\lambda_0(-L_w) = 0$ and the associated one-dimensional eigenspace is generated by the constant eigenfunction $e_0\equiv 1$. If $0$ is isolated in the spectrum, it means that the second element of the spectrum defined by the variational formula 
\begin{equation}
\label{eq: Courant}
\lambda_1(-L_w)=\inf_{\substack{f\in H^1(\mu,\weight) \\ f \scalebox{.7}{\mbox{ centered}} }}\frac{\int_a^b (-L_w f) \, f \, d\mu}{\int_{a}^b f ^2 \, d\mu},
\end{equation}
is positive. The quantity $\lambda_1(-L_w) \, (= \lambda_1(-L_w)-\lambda_0(-L_w) )$ is called the spectral gap of the diffusion operator $-L_w$. Note that it may not be an eigenvalue since the infimum above is not always reached. Using \eqref{eq: positivity}, we deduce that the Poincar\'e constant $C_P (\mu,w)$ admits the following spectral interpretation: 
$$
C_P(\mu,\weight) = \frac{1}{\lambda_1(-L_w)} .
$$ 
In this way, finding the Poincar\'e constant $C_P (\mu,w)$ is equivalent to identify the spectral gap $\lambda_1(-L_w)$, a dual interpretation which will be systematically used in our paper. Moreover, if the eigenspace related to the spectral gap is non empty, it is also one-dimensional and a given function $e_1$ (say) is an associated eigenfunction if and only if it saturates the weighted Poincar\'e inequality \eqref{eq: poincar\'e}. In particular it satisfies for all $g\in H^1(\mu,\weight)$,
\begin{equation}
\label{eq: variational formulation}
\int_a^b g\, e_1 \, d\mu = C_P(\mu,\weight) \, \int_a^b g \, (-L_w e_1 ) \, d\mu = C_P(\mu,\weight) \, \int_a^b \weight\, g' \, e'_1 \, d\mu .
\end{equation}

To finish this introduction about weighted Poincar\'e inequalities, we point out that it is a difficult task in general to give the explicit value of the spectral gap, except maybe for some particular examples of probability measures and weights. However there exists a useful method in the present one-dimensional setting which allows to identify it when the eigenfunction $e_1$ exists. By \cite{Chen} we know that $e_1$ is the only eigenfunction of the operator $-L_w$ such that its derivative does not vanish on $(a,b)$, is of constant sign and satisfies $\weight(a)f'(a)\rho(a) = \weight(b)f'(b)\rho(b)=0$. In other words, if we find some eigenfunction satisfying these properties, then the associated eigenvalue is necessarily the spectral gap. Actually, this observation is the main idea behind our main results to which we turn now.

\section{Main results on weighted Poincar\'e inequalities}
\label{sect:main}
\label{Result for optimal weights}
\subsection{A general result on the construction of weights}

In this part, we revisit the formulation of the optimal weight built by means of the Stein method in \cite[Theorem 3.5]{ernst_reinert_swan2020} (with $l=0$) and in \cite[Theorem 2.4]{germain_swan}, offering a new proof from a spectral point of view. In other words, Theorem \ref{teo: weight} below determines the weight for which a suitably selected function saturates the weighted Poincar\'e inequality. 
In particular this result extends the weights provided in \cite{Song} with linear saturating functions. \\
Notice that each weight and corresponding Poincar\'e constant are uniquely defined up to a multiplicative constant since $C_P(\mu, k w) = k^{-1} C_P(\mu, w)$ for every $k>0$. In this section we adopt the normalization $C_P(\mu,\weight)=1$.

\begin{theo}
\label{teo: weight}
Let $\mu \in \probClass$ and
let $g$ be a centered function in $\funClassTwo$. 
If $a$ (resp. $b$) is finite we assume that $g'(a)\neq 0$, or that $g'(a)=0$ and $g''(a)\neq 0$ (resp. $g'(b)\neq 0$, or that $g'(b)=0$ and $g''(b)\neq 0$). 
Then
\begin{itemize}
\item The function
\begin{equation}
\label{eq: optimalweight}
\weight_g(x) = -\frac{1}{g'(x)\rho(x)}\int_{a}^x g(y)\rho(y)\,dy,\quad x\in (a,b), 
\end{equation}
belongs to $\funClassWeight$. If $a$ is finite,
the value of $w_g(a)$   depends on $g$:
\begin{itemize}
\item If $g'(a)\neq 0$, then $\weight_g(a)=0$.
\item If $g'(a)=0$ and $g''(a)\neq 0$, then $\weight_g(a)=-g(a)/g''(a)>0$.
\end{itemize}
The same conclusion holds for $w_g(b)$ if $b$ is finite.
\item  If in addition $g\in L^2(\mu)$ (which is satisfied if $a$ and $b$ are finite), then the  weighted Poincar\'e inequality \eqref{eq: poincar\'e} holds with weight $\weight_g$ and Poincar\'e constant $C_P(\mu,\weight_g) = 1$. Furthermore, the inequality is saturated by $g$.
\end{itemize}
\end{theo}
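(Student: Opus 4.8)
The plan is to exploit the spectral characterization recalled at the end of Section \ref{Spectral interpretation}: by the result of \cite{Chen}, if we can exhibit an eigenfunction of $-L_{\weight_g}$ whose derivative is of constant sign on $(a,b)$, does not vanish, and satisfies the Neumann-type boundary conditions $\weight_g(a) g'(a) \rho(a) = \weight_g(b) g'(b) \rho(b) = 0$, then the associated eigenvalue must be the spectral gap $\lambda_1(-L_{\weight_g})$, and the corresponding eigenfunction saturates the weighted Poincar\'e inequality. The strategy is therefore to verify that $g$ itself is precisely such an eigenfunction with eigenvalue $1$, so that $\lambda_1(-L_{\weight_g}) = 1$ and hence $C_P(\mu,\weight_g) = 1/\lambda_1 = 1$, with $g$ as saturating function.

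First I would establish the membership claims of the first bullet. From the definition \eqref{eq: optimalweight}, since $g$ is centered we have $\int_a^b g \rho \, dy = 0$, so the numerator $-\int_a^x g \rho \, dy$ vanishes at both endpoints; this is the key to controlling $\weight_g$ near the boundary. On $(a,b)$ the sign of $\weight_g$ must be checked: because $g$ is centered and lies in $\funClassTwo$ (so $g'$ has constant sign, $g$ being monotonic up to sign), $g$ changes sign exactly once, and the antiderivative $\int_a^x g \rho\, dy$ has the appropriate sign to make $\weight_g$ positive on the interior. The boundary value of $\weight_g(a)$ when $a$ is finite follows from a de l'H\^opital / Taylor computation: if $g'(a)\neq 0$ the numerator is $o(1)$ while the denominator stays bounded away from $0$, giving $\weight_g(a)=0$; if $g'(a)=0$ but $g''(a)\neq 0$, I would expand both numerator and denominator to leading order near $a$ (using $\rho(a)>0$ from the definition of $\probClass$) to obtain the ratio $-g(a)/g''(a)$, which is positive by the sign analysis. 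The continuity and piecewise-$\C^1$ regularity of $\weight_g$ on $[a,b]$, i.e. membership in $\funClassWeight$, then follows from the regularity of $g$, $g'$, $\rho$ together with these boundary limits.

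The central step is the eigenfunction identity. Rearranging \eqref{eq: optimalweight} gives the exact relation
\begin{equation*}
\weight_g(x)\, g'(x)\, \rho(x) = -\int_a^x g(y)\rho(y)\,dy.
\end{equation*}
Differentiating both sides yields $\bigl(\weight_g\, g' \rho\bigr)' = -g\rho$, and dividing by $\rho$ gives exactly $L_{\weight_g} g = -g$, that is $-L_{\weight_g} g = g$, so $g$ is an eigenfunction with eigenvalue $\lambda = 1$. Moreover the boundary conditions $\weight_g(a)g'(a)\rho(a)=0$ and $\weight_g(b)g'(b)\rho(b)=0$ hold automatically, since each equals $-\int_a^a g\rho = 0$ and $-\int_a^b g\rho = 0$ respectively (the latter by centering). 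Finally $g' \in \funClass$ (or $-g'\in\funClass$) means $g'$ is of constant sign and non-vanishing on $(a,b)$, which is exactly the sign condition in Chen's criterion.

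The main obstacle I anticipate is the analytic bookkeeping at the boundary and the integrability requirement $g \in L^2(\mu)$ needed for $g$ to be a legitimate element of $H^1(\mu,\weight_g)$ and of the domain $\D(-L_{\weight_g})$, so that the variational formula \eqref{eq: Courant} and the spectral interpretation genuinely apply. When $a$ or $b$ is infinite one must confirm that $\weight_g$ lies in $\funClassWeight$ with the boundary terms vanishing in the limiting sense, and that $\weight_g^{1/2} g' \in L^2(\mu)$; the hypothesis $g\in L^2(\mu)$ (automatic in the finite case) is exactly what closes this gap. Once these regularity and integrability points are secured, Chen's theorem upgrades the eigenvalue $1$ to the spectral gap, forcing $C_P(\mu,\weight_g)=1$ and identifying $g$ as the saturating function, which completes the argument.
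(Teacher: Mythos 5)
Your proposal is correct and follows essentially the same route as the paper: you differentiate the defining relation to obtain $(\weight_g g'\rho)' = -g\rho$, recognize $g$ as an eigenfunction of $-L_{\weight_g}$ with eigenvalue $1$, and invoke Chen's criterion (the non-vanishing, constant-sign derivative plus the Neumann conditions) to identify this eigenvalue with the spectral gap, exactly as in the paper's proof; your treatment of the positivity of $\weight_g$ via the sign of the antiderivative $\int_a^x g\rho$ and of the boundary limits via Taylor expansion also matches the paper's variation-table and product-rewriting arguments. The only cosmetic difference is that you explicitly flag the verification $g \in H^1(\mu,\weight_g)$ as the place where the hypothesis $g \in L^2(\mu)$ enters, a point the paper leaves implicit.
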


\begin{proof}
We start dealing with the regularity of our weight. The definition of $\weight_g$ in \eqref{eq: optimalweight} provides its continuity on $(a,b)$. We multiply by $g'\rho$ in both sides and differentiate to obtain 
\begin{equation}
\label{eq: teo optimal weight 1}
(\weight_g g'\rho)' =
-g \rho ,
\end{equation}
so that we get on $(a,b)$, 
\begin{equation*}
\weight_g' = - \frac{g}{g'} - \weight_g \left(\frac{g''}{g'}+\frac{\rho'}{\rho}\right).
\end{equation*}
Then, the regularity of $g$ and $\rho$ together with the fact that $g'$ and $\rho$ do not vanish on $(a,b)$ show that $\weight_g'$ is piecewise continuous and thus $\weight_g$ is piecewise $\C^1$ on $(a,b)$. 

Let us prove now that $w_g$ is positive on $(a,b)$. The function $g$ is centered and increasing, hence $\lim_{x\to a} g(x) < 0$, $\lim_{x\to b} g(x) >0$ and there exists a unique $c\in (a,b)$ such that $g(c) = 0$. Denoting $G(x) := \int_a^x g(y)\rho(y)\,dy$, $x\in (a,b)$, we have the following table:  

\begin{table}[ht]
\begin{tabular}{clcccr}
                        & $a$ &          & $c$ &          & $b$         \\ \hline 
\multicolumn{1}{c|}{$G'$} & \multicolumn{2}{c}{-} &  $0$  & \multicolumn{2}{c}{+}
\\ 
\multicolumn{1}{c|}{$G$}  & \multicolumn{2}{c}{\hspace{0.3 cm}\rotatebox[origin=c]{-20}{$\scaleobj{1.4}{\longrightarrow}$}}  & & \multicolumn{2}{c}{\rotatebox[origin=c]{+20}{$\scaleobj{1.4}{\longrightarrow}$} \hspace{0.3 cm}}
\end{tabular}
\end{table}
\noindent and since $\lim_{x\to a} G(x) = \lim_{x\to b} G(x) =0$, we obtain $G <0$ and thus $w_g >0$ on $(a,b)$. 

Next, if $a$ is finite the value $w_g(a)$ is defined as the limit of $w_g(x)$ when $x\rightarrow a$, guaranteeing its right-continuity at point $a$
(the value $w_g(b)$ is obtained identically when $b$ is finite, providing the left-continuity at $b$). Namely if $g'(a)\neq 0$, then we observe immediately from the definition of $\weight_g$ that $\weight_g(a)=0$. Otherwise if $g'(a)=0$ and $g''(a)\neq 0$, then we rewrite $w_g$ as 
$$
\weight_g (x) = -\frac{x-a}{(g'(x)-g'(a)) \, \rho(x)} \times \frac{1}{x-a} \, \int_{a}^x g(y)\rho(y)\,dy,\quad x\in (a,b), 
$$
so that taking the limit $x\to a$ entails that  $\weight_g(a)=-g(a)/g''(a)$.

Finally, to prove that the weighted Poincar\'e inequality \eqref{eq: poincar\'e} is satisfied with weight $\weight_g$ and Poincar\'e constant $C_P(\mu,\weight_g) = 1$ when $g\in L^2 (\mu)$, we consider the spectral interpretation of the Poincar\'e constant. Equality (\ref{eq: teo optimal weight 1}) indicates that $g$ is an eigenfunction of (minus) the operator 
$$
L _{w_g} f = \frac{(\weight_g f' \rho)'}{\rho} , $$ with associated eigenvalue $\lambda = 1$. Furthermore, since the derivative of $g$ does not vanish on $(a,b)$, then the spectral gap $\lambda_1(-L_{w_g})$ coincides necessarily with this eigenvalue, meaning that 
$$ 
C_P(\mu,\weight_g) = \frac{1}{\lambda_1(-L_{w_g})} = 1,
$$ 
and that the weighted Poincar\'e inequality is saturated by $g$. The proof of Theorem \ref{teo: weight} is now complete.
\end{proof}
Notice that the assumptions on $g$ in Theorem \ref{teo: weight} mimick the properties required to be an eigenfunction $e_1$ associated to the inverse Poincar\'e constant. Indeed, first we know that $e_1$ is centered and is the only eigenfunction (up to a multiplicative constant) of the operator $-L _{w_g}$ satisfying $e_1'>0$ (up to a change of sign). Secondly, if $[a,b]$ is a finite interval, the possible boundary conditions are consistent with the values of $\weight_g$ at the boundary 
(we have $\weight_g(a)g'(a)\rho(a)=\weight_g(b)g'(b)\rho(b)=0$) 
and prevent it from exploding.

As we will see in the sequel, Theorem \ref{teo: weight} is one of the result on which our forthcoming numerical study is based and thus will be used many times in our paper, in particular when dealing with applications to GSA. When the weight $w_g$ provided by \eqref{eq: optimalweight} does not admit a closed-form expression, we will implement a numerical method that approximates it for any suitable pair of probability measure $\mu$ and function $g$. This method is further detailed in Section \ref{Numerical computation}. Before that, let us revisit the classical situation where $g$ is a linear function and investigate new relevant weights provided by Theorem \ref{teo: weight}.

\subsection{Revisiting the optimal weight for linear saturating functions}
\label{sect:linear} 
The classical weight $\weightId$ (say) used in the literature corresponds in Theorem \ref{teo: weight} to the linear choice 
$$
g(x) = \gId(x) = x - \int_a^b y\,\rho (y) \, dy, 
$$ 
which is often convenient since linear functions saturate the devoted weighted Poincar\'e inequality. This weight is given by the formula
$$
\weightId (x) = - \frac{1}{\rho(x)} \, \int_{a}^x \gId (y) \, \rho(y)\,dy,\quad x\in [a,b]. 
$$
In particular we will see in Section \ref{Case of equality in the upper bound and stability} that this choice is optimal when approximating linear phenomena in the GSA context. Notice that $\weightId$ has to vanish at the boundary (if non empty) since $\gId' \equiv 1$ does not vanish at the boundary. Following this strategy, the authors in \cite{Song} give a list of closed-form expressions for $\weightId$ associated to classical laws, including the uniform, exponential and Gaussian distributions. 

However it is necessary to consider other examples frequently encountered in GSA problems, typically truncated distributions. Heavy-tailed measures are also considered, for which the presence of a weight becomes necessary to establish a Poincar\'e-type inequality (recall that they do not satisfy the classical one due to the lack of exponential integrability, see for instance \cite{BGL}). Table \ref{table: weights} below provides some of those examples, including two heavy-tailed distributions: the generalized Cauchy measure $\mu_\beta$ of parameter $\beta >1/2$, denoted $\mathcal{C}(\beta)$, whose density is defined as  
$$
\rho (x) = \frac{1}{Z_\beta \, (1+x^2)^{\beta}}, \quad x\in \R ,  
$$ 
with $Z_\beta$ the normalization constant, and the Pareto distribution $\mu_{z,\alpha}$ with parameters $z>0$ and $\alpha>0$, denoted $\mathcal{P} ar (z,\alpha)$, with density given by 
$$
\rho (x) = \frac{\alpha z^\alpha} {x^{\alpha+1}} , \quad x \geq z.
$$
According to Theorem \ref{teo: weight}, both measures satisfy a weighted Poincar\'e inequality with their respective weights $\weightId$ if they admit a finite second moment. This condition is fulfilled when $\beta>3/2$ and $\alpha>2$, respectively. Table \ref{table: weights} also includes the exact expression of the weight $\weightId$ for some truncated probability measures. We refer to Appendix \ref{Computations of weights adapted for linear functions} for detailed computations.
All the weights vanish at the boundary (when non empty), as expected according to Theorem \ref{teo: weight}. Moreover they converge pointwise as $h \to \infty$ to the weight $\weightId$ on the whole space (provided it is well-defined, \textit{i.e.} $\gId$ is square-integrable), a property which is also true in full generality. Regarding the truncated versions of $\mathcal{C}(\beta)$ and $\mathcal{P}ar (z,\alpha)$, we point out that the parameters $\beta$ and $\alpha$ can take any real value since the Lebesgue density is always continuous on the truncated interval and thus no extra integrability condition is required.

\begin{table}[ht]
\begin{center}
\Large
\resizebox{\textwidth}{!}{
\begin{tabular}{|c|cc|}
\hline
Probability measure   $\mu$              & \multicolumn{2}{c|}{Weight $\weightId$}   \\  \hline 
\hline 

\multirow{2}{*}{Uniform $\mathcal{U}(a,b)$}    & \multicolumn{2}{c|}{\multirow{2}{*}{$\frac{1}{2}(x-a)(b-x)$}} \\
                             & \multicolumn{2}{c|}{}                  \\ \hline
\multirow{2}{*}{
\vspace{-0.5 cm}
Exponential $\mathcal{E}(\gamma)$} & \multicolumn{1}{c|}{
On $\R^+$}  & Truncated on $[0,h]$ \\ \cline{2-3} 
                             & \multicolumn{1}{c|}{$\displaystyle\frac{x}{\gamma }$}       &     $\displaystyle\frac{1}{\gamma}\left(x-h \, \frac{1-e^{\gamma x}}{1-e^{\gamma h}}\right)$    \\ \hline
\multirow{2}{*}{\vspace{-0.5 cm}
Normal $\mathcal{N}(m,\sigma^2)$}      & \multicolumn{1}{c|}{On $\R$}  & Truncated on $[m-h,m+h]$ \\ \cline{2-3} 
                             & \multicolumn{1}{c|}{$\displaystyle \sigma^2$}       &       $\displaystyle \sigma^2\left(1-\exp\left(\frac{(x-m)^2}{2\sigma^2}-\frac{h^2}{2\sigma^2}\right)\right)$   \\ \hline
\multirow{2}{*}{
$\begin{array}{c}
       \\ \\ \\
    \mbox{Gen. Cauchy } 
    \mathcal{C}(\beta) \\
     \\
\end{array}
$
}      & \multicolumn{1}{c|}{On $\R$}  & Truncated on $[-h,h]$ \\ \cline{2-3} 
                       &
                       \multicolumn{1}{c|}{
                       $
                       \begin{array}{c}
                           \mbox{For }\beta>3/2:\hspace{0.2 cm} \\ 
                           \vspace{-0.3 cm} 
                           \\    
     \displaystyle\frac{1+x^2}{2(\beta-1)} 
                       \end{array}
                       $
                       
                       }       &
                       $\begin{array}{c}
                         \mbox{For }\beta\neq 1: \hspace{8.2 cm}   \\  \displaystyle\frac{1}{2(\beta-1)}\left((1+x^2)-(1+x^2)^\beta (1+h^2)^{-\beta+1}\right)
                         \\
                        \mbox{For }\beta=1: \hspace{8.2 cm} \\     \displaystyle \frac{1}{2}(1+x^2)\log\left(\frac{1+h^2}{1+x^2}\right)
                       \end{array}$   \\ \hline
                       
\multirow{2}{*}{
$\begin{array}{c}
     \\ \\  \\
    \mbox{Pareto } \mathcal{P} ar (z,\alpha) \\
     \\
     \\
\end{array}
$
}      & \multicolumn{1}{c|}{On $[z,\infty)$}  & Truncated on $[z,z+h]$ \\ \cline{2-3} 
                       &
                       \multicolumn{1}{c|}{
                       $
                       \begin{array}{c}
                       \mbox{For }\alpha>2:
                       \hspace{0.7 cm}\\
                       \vspace{-0.3 cm} 
                       \\
                          \displaystyle \frac{x(x-z)}{\alpha-1}
                       \end{array}$
                                              }       &
                       $
                       \begin{array}{c}
                        \mbox{For }\alpha\neq 1:\hspace{8.2 cm}\\ 
                       \displaystyle \frac{x^{\alpha+1}}{\alpha-1} \left(\frac{z^{1-\alpha}-(z+h)^{1-\alpha}}{z^{-\alpha}-(z+h)^{-\alpha}}(z^{-\alpha}-x^{-\alpha})-(z^{1-\alpha}-x^{1-\alpha})\right) \\
                        \displaystyle 
                        \mbox{For }\alpha=1: \hspace{8.2 cm}\\   
                        \displaystyle x^2\left(\log\left(\frac{z+h}{z}\right)\frac{z^{-1}-x^{-1}}{z^{-1}-(z+h)^{-1}}-\log\left(\frac{x}{z}\right)\right)
                       \end{array}
                       $ 
                       \\ \hline
                       
\end{tabular}
}
\end{center}
\caption{Examples of weight $\weightId$.}
\label{table: weights}
\end{table}

\subsection{Beyond $\weightId$: non-vanishing weights on finite intervals}
\label{sect:non_vanishing}
Theorem \ref{teo: weight} allows the generation of weights with saturating functions beyond linear ones. Furthermore, it enables to consider non-vanishing weights. This case is particularly relevant in GSA since then the existence of an orthonormal basis of eigenfunctions is guaranteed, at least when the interval $[a,b]$ is finite (the associated Sturm-Liouville problem is regular, cf. \cite{Zettl}).

According to Theorem \ref{teo: weight}, adding the requirement that the desired weight does not vanish forces the associated saturating function $g$ to satisfy very specific conditions. Namely, in addition to be a centered function in $\funClassTwo$, $g$ must verify
\begin{equation}
    \label{eq: boundary conditions}
    g'(a)=g'(b)=0, \quad g''(a)\neq 0\quad \mbox{and}\quad g''(b)\neq 0.
\end{equation}
A simple way to build such a suitable function is to consider another ``reference'' probability measure $\muref$ and to look at a function $\Tilde{g}$ saturating the corresponding classical Poincar\'e inequality. Indeed, $\Tilde{g}$ is an eigenfunction for the diffusion operator related to $\muref$ with (non-vanishing) constant weight equal 1, so that $\Tilde{g} \in \funClassTwo$ and satisfies the Neumann conditions $\Tilde{g}'(a)=\Tilde{g}'(b)=0$. It is easy to deduce that $\Tilde{g}$ then also verifies $\Tilde{g}''(a)=\Tilde{g}''(b)=0$. The function $g$ is then obtained by centering $\tilde{g}$ with respect to $\mu$. We collect all these elements in the following proposition. 
\begin{prop} 
\label{teo:refMeasure}
Consider two probability measures $\mu, \Tilde{\mu} \in \probClass$ with $(a,b)$ finite. Let $\Tilde{g}$ be the function saturating the classical Poincar\'e inequality for $\Tilde{\mu}$. Then the function $g$ defined by $g (x) = \Tilde{g} (x) - \int_{a}^{b} \Tilde{g}(y) \,\mu(dy)$ belongs to $\funClassTwo$ and satisfies \eqref{eq: boundary conditions}. Moreover it generates a non-vanishing weight $w_g\in \funClassWeight$ leading to a weighted Poincar\'e inequality for $\mu$. \end{prop}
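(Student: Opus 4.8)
The plan is to produce the function $g$ exactly so that it falls into the second boundary regime of Theorem \ref{teo: weight} (that is, $g'=0$ but $g''\neq 0$ at each endpoint), and then to read off both the membership $\weight_g\in\funClassWeight$ and the non-vanishing of $\weight_g$ directly from that theorem. First I would record what the saturating function $\Tilde{g}$ inherits from the spectral picture of Section \ref{Spectral interpretation}. Since $\Tilde{g}$ saturates the classical Poincar\'e inequality for $\muref$, it is the first non-trivial eigenfunction of the operator $-L$ built from $\muref$ with constant weight $\weight\equiv 1$, say $-L\Tilde{g}=\lambda_1\Tilde{g}$ with $\lambda_1>0$ (the spectral gap is positive because on the finite interval $[a,b]$ the Sturm--Liouville problem attached to $\muref$ is regular). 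By the characterization of \cite{Chen} recalled in Section \ref{Spectral interpretation}, $\Tilde{g}'$ is of constant sign on $(a,b)$; up to replacing $\Tilde{g}$ by $-\Tilde{g}$ I may assume $\Tilde{g}'>0$, which already gives $\Tilde{g}\in\funClassTwo$. The Neumann condition $\weight(a)\Tilde{g}'(a)\Tilde{\rho}(a)=\weight(b)\Tilde{g}'(b)\Tilde{\rho}(b)=0$ with $\weight\equiv 1$ and $\Tilde{\rho}(a),\Tilde{\rho}(b)>0$ (from $\muref\in\probClass$) forces $\Tilde{g}'(a)=\Tilde{g}'(b)=0$.

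The only substantive step is to establish $\Tilde{g}''(a)\neq 0$ and $\Tilde{g}''(b)\neq 0$, i.e. the non-vanishing assertion in \eqref{eq: boundary conditions}. I would rewrite the eigenvalue equation as $\Tilde{g}''=-(\log\Tilde{\rho})'\,\Tilde{g}'-\lambda_1\Tilde{g}$ and evaluate it at the endpoints. Near $a$ the coefficient $(\log\Tilde{\rho})'=\Tilde{\rho}'/\Tilde{\rho}$ is continuous and finite (again because $\Tilde{\rho}(a)>0$), so $\Tilde{g}''$ extends continuously up to $a$; since $\Tilde{g}'(a)=0$ the middle term drops out, leaving $\Tilde{g}''(a)=-\lambda_1\Tilde{g}(a)$, and likewise $\Tilde{g}''(b)=-\lambda_1\Tilde{g}(b)$. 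As $\Tilde{g}$ is centered with respect to $\muref$ and strictly increasing, the sign argument used in the proof of Theorem \ref{teo: weight} gives $\Tilde{g}(a)<0<\Tilde{g}(b)$; together with $\lambda_1>0$ this yields $\Tilde{g}''(a)>0$ and $\Tilde{g}''(b)<0$, both nonzero.

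It remains to transfer these properties to $g$ and invoke Theorem \ref{teo: weight}. Writing $g=\Tilde{g}-c$ with the constant $c=\int_a^b\Tilde{g}\,d\mu$, one has $g'=\Tilde{g}'$ and $g''=\Tilde{g}''$, so $g\in\funClassTwo$ and $g$ satisfies \eqref{eq: boundary conditions}; moreover $\int_a^b g\,d\mu=0$ by the choice of $c$, so $g$ is centered with respect to $\mu$. Thus the pair $(\mu,g)$ meets the hypotheses of Theorem \ref{teo: weight} in the regime $g'(a)=g'(b)=0$, $g''(a),g''(b)\neq 0$, and since $a,b$ are finite, $g\in L^2(\mu)$ automatically. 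Theorem \ref{teo: weight} then produces $\weight_g\in\funClassWeight$ with the weighted Poincar\'e inequality for $\mu$ (constant $1$, saturated by $g$) and, crucially, the explicit boundary values $\weight_g(a)=-g(a)/g''(a)>0$ and $\weight_g(b)=-g(b)/g''(b)>0$. Being positive on $(a,b)$ and strictly positive at both endpoints, $\weight_g$ is non-vanishing on all of $[a,b]$, which is the claim.

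The main obstacle is the second step: showing $\Tilde{g}''(a),\Tilde{g}''(b)\neq 0$. This is the one place where one must go beyond the formal Neumann conditions and use genuine information about the eigenfunction, namely the positivity of the spectral gap $\lambda_1$, the finiteness of $(\log\Tilde{\rho})'$ at the boundary (where the standing hypothesis $\Tilde{\rho}(a),\Tilde{\rho}(b)>0$ is essential), and the strict sign of $\Tilde{g}$ at the endpoints coming from its monotonicity and centering. Everything else is bookkeeping: subtracting a constant leaves the derivatives untouched, and the non-vanishing of the weight is already packaged in the conclusion of Theorem \ref{teo: weight}.
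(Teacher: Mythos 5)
Your proof is correct and follows essentially the same route as the paper, which likewise derives the Neumann conditions from the spectral characterization of $\Tilde{g}$ as first non-trivial eigenfunction and then feeds the $\mu$-centered function $g$ into Theorem \ref{teo: weight}; your endpoint evaluation of the eigenvalue equation, yielding $\Tilde{g}''(a)=-\lambda_1\Tilde{g}(a)\neq 0$ and $\Tilde{g}''(b)=-\lambda_1\Tilde{g}(b)\neq 0$ via the centering and monotonicity of $\Tilde{g}$, is exactly the step the paper dismisses as ``easy to deduce''. Incidentally, your computation confirms that the paper's preceding sentence asserting $\Tilde{g}''(a)=\Tilde{g}''(b)=0$ is a typo for $\Tilde{g}''(a),\Tilde{g}''(b)\neq 0$, since the vanishing version would contradict \eqref{eq: boundary conditions}.
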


There are several ways to define $\Tilde{\mu}$.
First, as an extension of $\weightId$, we can define $\muref$ on the finite interval $[a,b]$ for which $\tilde{g}$ is close to be linear. In the context of GSA, the associated weight will be adapted to linear phenomena (as it is the case with $\weightId$), as explained by the stability argument given in Section \ref{Case of equality in the upper bound and stability}. Since the function $\gId$ saturates the classical Poincar\'e inequality for the normal distribution on the real line, 
the idea is to choose $\muref$ as a truncated Gaussian measure highly concentrated in $[a,b]$. For instance, we consider for $\muref$ the distribution $\mathcal{N}\left(\frac{a+b}{2},\sigma^2\right)\bigr|_{[a,b]}$, with variance $\sigma^2$ chosen such that $\mathbb{P}\left(\mathcal{N}\left(\frac{a+b}{2},\sigma^2\right)\in [a,b]\right)=0.95$. Then the corresponding function $\Tilde{g}$ is the so-called Kummer function given in terms of an hypergeometric series, see for instance \cite{poincareintervals}. 
As expected, numerical computations suggest that $\Tilde{g}$ is approximately linear except near the boundary since its derivatives vanish at these points. In the sequel, we denote by $\weightG$ the weight $w_g$ associated to this choice of $\muref$ (by Proposition \ref{teo:refMeasure}) in order to emphasize the role of the Gaussian distribution.

Secondly, another idea is to choose a convenient reference probability measure $\Tilde{\mu}$ that allows simple computations. A natural candidate is the uniform distribution $\mathcal{U}(a,b)$. It is well-known that the optimal constant in the classical Poincar\'e inequality is $C_P(\Tilde{\mu},1) = (b-a)^2/\pi^2$ and the corresponding saturating function is  $\Tilde{g} (x) = \cos\left(\pi(x-a)/(b-a)\right)$. We note $\weight_{\mathrm{U}}$ the generated weight.

\subsection{Numerical computation}
\label{Numerical computation}
In this short part we provide a numerical method to approximate the weight $\weight_g$ from any eligible probability measure $\mu \in \probClass$, with $[a,b]$ finite, and function $g$. The idea is to solve the Cauchy problem
     \begin{equation*}
     \label{eq: RK4}
       \left\{\begin{array}{rl}
        (\weight_g g'\rho)'(x)&=-g(x)\rho(x) \quad \mbox{on }(a,b),\\
        (\weight_g g'\rho)(a)&=0,
       \end{array}\right.
     \end{equation*}
and then simply divide the solution by $g'\rho$. Above $\rho$ stands for the Lebesgue density of $\mu$. The method is implemented using the R software. To solve the differential equation we use the Runge-Kutta 4 method (in R, the function \Rnot{rk4} from the package \Rnot{deSolve} \cite{R_deSolve}) which is very accurate. Indeed, it is known that if $h$ denotes the size of the uniform partition of the interval $[a,b]$, the approximation error at any point is of order $O(h^4)$, cf. for instance \cite{BurdenNumerical}. Note that the estimated weight vanishes at points $a$ and $b$. To prevent inconsistencies when $\weight_g(a) \neq 0$ or/and $\weight_g(b)\neq 0$, we apply smooth corrections near the boundaries. We illustrate the precision of the numerical method by computing weights for the standard uniform distribution and a truncated normal one. Figures \ref{fig: Uniform} and \ref{fig: Normal} below display the theoretical weights $\weightId$, together with their numerical approximations and the numerical approximations of $\weight_\mathrm{U}$ and $\weightG$. To compute $\weightG$, we initially approximate the function that generates it. This is carried out through the finite element method as in \cite{poincareintervals}. Note that for the case of the uniform distribution, $\weight_U$ takes the constant value $C_P(\mu,1)=1/\pi^2$ (so that the probability measure $\mu$ and the reference one $\Tilde{\mu}$ are the same).
     \begin{figure}[ht]
         \centering
         \includegraphics[scale=0.5]{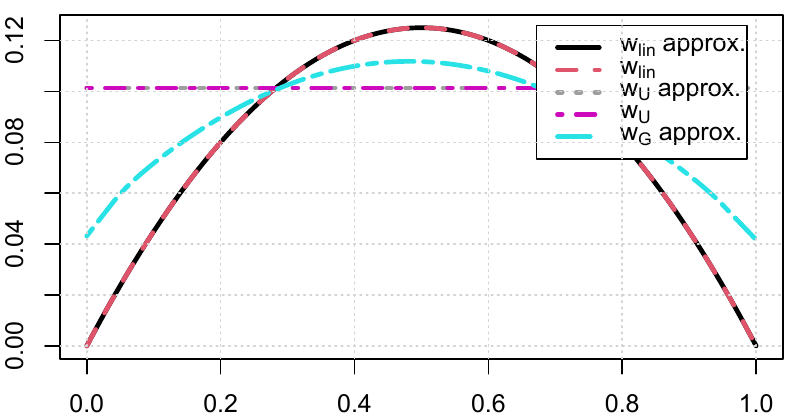}
         \caption{The weights $\weightId(x)=\frac{1}{2}x(1-x)$, $\weight_\mathrm{U} (x)=1/\pi^2$, their numerical approximations and the numerical approximation of $w_\G$, associated to the uniform distribution $\mathcal{U}(0,1)$.}
         \label{fig: Uniform}
     \end{figure}
     \begin{figure}[ht]
         \centering
\includegraphics[scale=0.5]{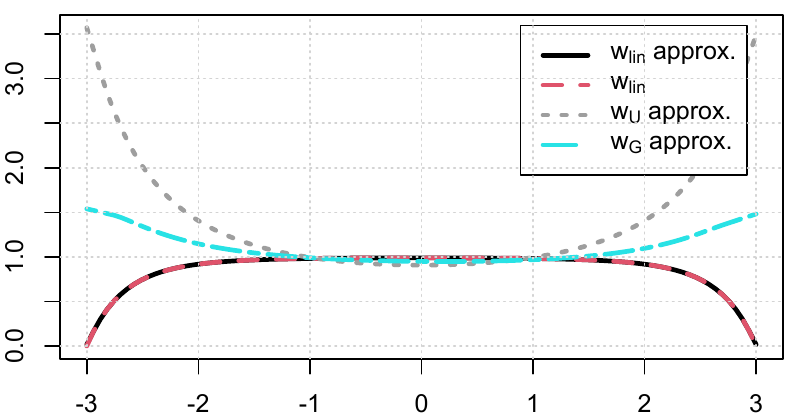}
         \caption{The weight $\weightId(x)=\displaystyle 1-\exp\left(x^2/2-9/2\right)$, its numerical approximation and the numerical approximations of $w_\mathrm{U}$ and $w_G$, associated to the truncated normal distribution $\mathcal{N}(0,1)$ on the interval $[0,3]$.}
         \label{fig: Normal}
     \end{figure}

Finally, Figure \ref{fig: Gumbel} below shows the numerical approximations of $\weightId$, $\weightG$ and $\weight_\mathrm{U}$ associated to a truncated Gumbel distribution that naturally appears in hydrology according to extreme value theory (its Lebesgue density is given in Section \ref{Application to a flood model}). We do not have a close expression for any of the weights in this case.
          \begin{figure}
         \centering
         \includegraphics[scale=0.5]{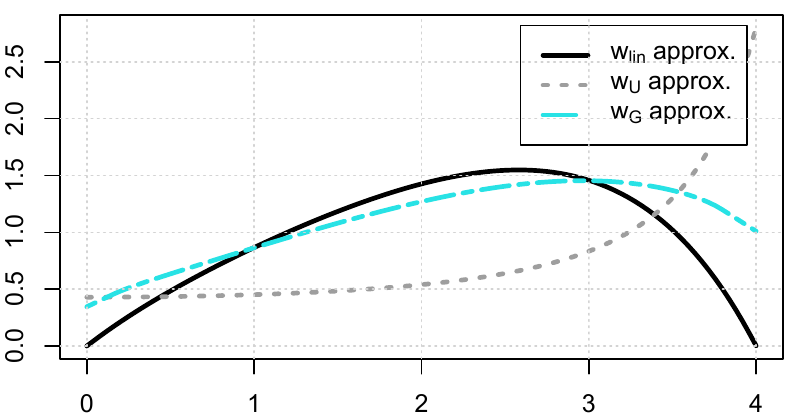}
         \caption{Numerical approximations of $\weightId$, $w_\mathrm{U}$ and $w_G$ associated to the truncated Gumbel distribution $\mathcal{G}(0,1)$ truncated on $[0,4]$.}
         \label{fig: Gumbel}
     \end{figure}
     
\subsection{When the weighted Poincar\'e inequality is not saturated}
\label{sect:intert}
As we have seen previously in Theorem \ref{teo: weight}, our strategy is to construct a weight such that the functions saturating the corresponding weighted Poincar\'e inequality can be identified. In other words, those functions are the eigenfunctions with associated eigenvalue the spectral gap, \textit{i.e.}, the inverse Poincar\'e constant, of a convenient diffusion operator. However, two obstructions may occur within this framework: on the one hand the weight may not be explicitly computable and on the other hand enforcing the existence of those saturating functions reduces drastically the scope of admissible weights. For instance the latter approach  fails when considering linear functions associated to $\weightId$ as in Section \ref{sect:linear}, which are not always square integrable with respect to heavy-tailed distributions.  

In this part, we compute explicitly the Poincar\'e constant for more general weights. In particular we have in mind some examples for which the weighted Poincar\'e inequality does not admit saturating functions, so that Theorem \ref{teo: weight} cannot be applied. To address this problem, the approach we adopt is based on the intertwining technique emphasized in \cite{bj,bjm} in the one-dimensional case. Recall that a given probability measure $\mu \in \mathcal{P} (a,b)$ has density $\rho \in \funClass$ which takes positive values at the boundary and the notation $L_w$ stands for the (self-adjoint extension of the) operator defined on $\C_{N,w}^\infty(a,b)$ by 
$$
L_w f = w f'' + \left( w' + w (\log \rho )' \right) \, f',   
$$
where $\weight\in \funClassWeight$ is a weight function. 

Let us state first Theorem 4.2 in \cite{bj} but adapted to the present context. 
\begin{theo}
\label{theo:bj}
Assume that there exists some smooth function $h$ with non vanishing derivative on $(a,b)$ such that the function 
\begin{equation}
\label{eq: intertwining function chen}
    M_{w,h} := \frac{(-L_w h )' }{h'}, 
\end{equation}
is bounded from below on $(a,b)$ by some positive constant. Then we have the following weighted Poincar\'e inequality: for every centered function $f\in H^1(\mu,\weight)$, 
$$
\int_{a}^b f ^2 \, d \mu \leq \int_a^b \frac{\weight \, (f') ^2} {M_{w,h} } \, d\mu .
$$
In particular it holds  
\begin{equation}
\label{eq: intertwining upper bound}
C_P(\mu,\weight)
\leq \frac{1}{\underset{(a,b)}{\inf} \, M_{w,h}} .
\end{equation}
\end{theo}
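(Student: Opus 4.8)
The plan is to go through the spectral interpretation $C_P(\mu,\weight)=1/\lambda_1(-L_w)$ together with an \emph{intertwining relation} between $L_w$ and an auxiliary diffusion operator manufactured from $h$. First I would record the elementary commutation between differentiation and $L_w$: writing $L_w f=\weight f''+b f'$ with $b=\weight'+\weight(\log\rho)'$, a direct computation gives $(L_w f)'=\tilde L(f')$, where $\tilde L u=\weight u''+(\weight'+b)u'+b'u$. Since $(-L_w h)'=-\tilde L(h')$, the hypothesis on $h$ reads $\tilde L(h')=-M_{w,h}\,h'$; that is, $h'$ is a generalized eigenfunction of $\tilde L$ with variable eigenvalue $-M_{w,h}$. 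I would then perform the ground-state (Doob) transform by $h'$: setting $v=f'/h'$ and $\hat L v:=\tfrac{1}{h'}\tilde L(h'v)+M_{w,h}v$, the zeroth-order term cancels, leaving a genuine diffusion operator $\hat L v=\weight v''+(\weight'+b+2\weight h''/h')v'$, which is symmetric and non-negative on $L^2(\hat\mu)$ for the $\sigma$-finite measure $d\hat\mu=(h')^2\weight\rho\,dx$ (indeed $\int(-\hat L v)\,v\,d\hat\mu=\int\weight(v')^2\,d\hat\mu\ge 0$ after an integration by parts). Combining the two computations yields the intertwining identity
\[
(L_w f)'=h'\,\hat L\!\left(\frac{f'}{h'}\right)-M_{w,h}\,f'.
\]

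Next I would propagate this to the semigroups. Let $P_t=e^{tL_w}$ act on $L^2(\mu)$ and let $\hat P_t^M=e^{t(\hat L-M_{w,h})}$ be the Feynman--Kac semigroup associated with $\hat L$ and the potential $M_{w,h}$. The intertwining relation shows that $\psi_t:=(P_tf)'/h'$ solves $\partial_t\psi_t=(\hat L-M_{w,h})\psi_t$, hence $(P_tf)'=h'\,\hat P_t^M(f'/h')$. Writing $\phi:=f'/h'$, which lies in $L^2(\hat\mu)$ precisely because $f\in H^1(\mu,\weight)$, the standard semigroup formula for the variance gives
\[
\Var_\mu(f)=2\int_0^\infty\!\!\int_a^b\weight\,\big((P_tf)'\big)^2\,d\mu\,dt=2\int_0^\infty\|\hat P_t^M\phi\|_{\hat\mu}^2\,dt=\langle A^{-1}\phi,\phi\rangle_{\hat\mu},
\]
where $A:=-\hat L+M_{w,h}$ and I used $2\int_0^\infty e^{-2tA}\,dt=A^{-1}$, valid since $A\ge\inf M_{w,h}>0$. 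Because $-\hat L\ge 0$ as a quadratic form, we have $A\ge M_{w,h}$ as positive self-adjoint operators, so antitonicity of inversion gives $A^{-1}\le M_{w,h}^{-1}$ and therefore $\Var_\mu(f)\le\langle M_{w,h}^{-1}\phi,\phi\rangle_{\hat\mu}=\int_a^b\weight\,(f')^2/M_{w,h}\,d\mu$, which is exactly the asserted inequality; bounding $M_{w,h}$ from below by its infimum inside the integral then yields \eqref{eq: intertwining upper bound}.

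The genuinely delicate part is the analytic justification, which I expect to be the main obstacle, since the algebra above is routine. One must check that $\hat L$ is (essentially) self-adjoint on $L^2(\hat\mu)$ with the Neumann-type boundary conditions inherited from those of $L_w$, that the intertwining passes rigorously to the semigroup level (differentiating under $P_t$ and controlling the boundary terms in the integrations by parts), and that the variance formula together with the interchange of $\int_0^\infty$ and $\int_a^b$ is licit — all of which lean on the completeness / essential self-adjointness hypotheses recalled in Section \ref{Spectral interpretation}. The semigroup intertwining $(P_tf)'=h'\,\hat P_t^M(f'/h')$, with its boundary bookkeeping, is the crux. As a lighter alternative that sidesteps the semigroup machinery, one may invoke Chen's dual variational formula, which directly yields $\lambda_1(-L_w)\ge\inf M_{w,h}$ and hence \eqref{eq: intertwining upper bound}; however, it does not by itself deliver the sharper pointwise form with $M_{w,h}$ kept inside the integral, so the intertwining/operator-monotonicity argument above is what I would use for the full statement.
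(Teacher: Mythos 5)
The paper offers no proof of this theorem at all: it is stated as an adaptation of Theorem 4.2 of \cite{bj}, and the proof there is precisely the intertwining/Feynman--Kac argument you reconstruct --- the identity $(L_w f)' = h'\,\hat L(f'/h') - M_{w,h}\,f'$, its propagation to the semigroup level, the variance formula $\Var_\mu(f)=2\int_0^\infty \int_a^b w\,((P_tf)')^2\,d\mu\,dt$, and the operator comparison $A^{-1}\le M_{w,h}^{-1}$ giving the pointwise (non-infimized) bound. Your proposal is correct and follows essentially the same route as the cited source, and the analytic caveats you flag (essential self-adjointness of $\hat L$, rigor of the semigroup intertwining, ergodicity behind the variance formula, and domain issues in the antitonicity of inversion) are exactly the points handled carefully in \cite{bj}.
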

The formula \eqref{eq: intertwining upper bound} can be seen as a generalization to the operator $L_w$ with diffusion constant $w$ of the famous Chen-Wang result \cite{Chen} on the spectral gap established by a coupling technique. As such, it has already been used in the Gaussian and generalized Cauchy cases to derive the exact expression of the Poincar\'e constants for some specific weights when there is no saturating function for the underlying weighted Poincar\'e inequalities, cf. \cite{bjm} (in the generalized Cauchy case, it corresponds to parameters $\beta \in (1/2,3/2]$, the range $\beta >3/2$ being recovered directly in Table \ref{table: weights}). In particular, an interesting choice of functions $h$ (or rather $h'$ since only $h'$ and its derivatives appears in the formula) parametrized by $\veps \in \R$ is $h_\veps' = \rho ^{-\veps} / \weight$ so that we have on $(a,b)$, 
\begin{equation}
\label{eq: intertwining function Joulin Bonnefont Ma}
M_{w,\veps} := M_{w, h_\veps} = (1-\veps) \, \weight \left(\veps\left((\log \, \rho )'\right)^2- (\log \, \rho)''\right).
\end{equation}
Then by \eqref{eq: intertwining upper bound} we obtain
\begin{equation}
\label{eq: eps intertwining upper bound}
C_P(\mu,\weight)
\leq \frac{1}{\underset{\veps\in \R}{\sup} \,  \underset{(a,b)}{\inf} \, M_{w, \veps}}.
\end{equation}

On the one hand when the weight is prescribed \textit{a priori}, the estimate \eqref{eq: eps intertwining upper bound} is convenient in many situations. On the other hand when we look also at some convenient weight $w$, we proceed as follows: compute first the right-hand-side of \eqref{eq: intertwining function Joulin Bonnefont Ma} (without $w$) which is expected to be non-negative for some relevant parameter $\veps$, and then choose the desired weight $w$ in such way that $M_{w,\veps}$ is bounded from below by some positive constant $c$. If for some reasons we suspect that $c = C_P(\mu,w)$, then we try to find some sequence of centered functions $(f_\eta)$ in $H^1 (\mu,w)$, indexed by some parameter $\eta$, such that the Rayleigh quotient $\int_{a}^b f_\eta ^2 \, d \mu / \int_{a}^b w \, (f_\eta') ^2 \, d \mu$ converges to $c$ as $\eta$ converges to some key value $\eta^\star$. In practice the sequence $(f_\eta)$ and the key value $\eta^\star$ are chosen such that the limiting function does not belong to $H^1 (\mu,w)$.

As announced, let us observe what happens in some classical and less classical examples, for which we are able to derive the exact value of the Poincar\'e constant although there is no function saturating the weighted Poincar\'e inequality.

\begin{Ex}
\rm{Considering the exponential distribution $\mu_\gamma$ with parameter $\gamma >0$, whose density is given on $\R^+$ by $\rho (x)  = \gamma e^{-\gamma x}$, we have seen in Section \ref{sect:non_vanishing} that the choice $w (x) = \weightId (x) = x/\gamma $ is related to linear eigenfunctions of the so-called Laguerre operator and yields $C_P (\mu_\gamma, \weightId ) = 1$, see for instance \cite{BGL}. In the unweighted case, there is no function saturating the Poincar\'e inequality, although we are able to identify the corresponding Poincar\'e constant: by \cite{bob_ledoux_expo}, it is well known that $C_P(\mu_\gamma , 1) = 4/\gamma^2$ (from a spectral point of view, it corresponds to the bottom of the essential spectrum of the devoted operator) and the classical approach to obtain the inequality $C_P(\mu_\gamma,1)\leq 4/\gamma^2$ uses a specific integration by part formula satisfied by $\mu_\gamma$. Here we are able to recover the same bound directly with \eqref{eq: eps intertwining upper bound}. Indeed, in this case the function $M_{1,\veps}$ is constant on $(0,\infty)$ and has the value $\gamma^2\veps(1-\veps)$. Optimizing with respect to $\veps$ yields 
\[
C_P(\mu_\gamma,1) \leq \frac{1}{\underset{\veps \in \R}{\sup} \, \gamma^2\veps(1-\veps)} = \frac{4}{\gamma^2}.
\]
The converse inequality is proved by considering a family of centered functions $f_\eta(x)=e^{\eta x} - \gamma / (\gamma-\eta)$, with $\eta<\gamma/2$ so they belong to $H^1(\mu_\gamma,1)$: we have after some computations,
\[
\frac{\int_0 ^{\infty} f_\eta ^2 \,  d\mu_\gamma}{\int_{0}^\infty (f_\eta ')^2 \, d \mu_\gamma} = \frac{(\eta-\gamma)^2-\gamma(\gamma - 2\eta)}{\eta ^2 (\eta-\gamma)^2},
\]
and taking above the limit $\eta\rightarrow \gamma/2$ entails by \eqref{eq: Courant} the inequality $C_P(\mu_\gamma,1) \geq 4/\gamma^2$.
}\end{Ex}

\begin{Ex}
\rm{In a somewhat similar unweighted context, we consider the generalized logistic (or skew-logistic) distribution $\mu_\alpha $ of parameter $\alpha >0$ on $\R$, namely with Lebesgue density
$$
\rho (x) = \frac{\alpha e^{-x}}{(1+e^{-x}) ^{\alpha+1}}, \quad x\in \R. 
$$
Similarly to the previous example, this probability measure is also log-concave on the real line, \textit{i.e.}, $- \log \rho $ is a convex function on $\R$. When $\alpha = 1$ we deal with the classical logistic distribution, which can be seen as a regularized version around the origin of the Laplace  distribution on the real line since $\rho (x) \sim e^{-\vert x\vert }$ as $\vert x\vert \to \infty$. In this case, the authors in \cite{barthe_bianchini_colesanti} proved that $C_P (\mu_1 ,1) = 4$ but there is no corresponding function saturating the Poincar\'e inequality. Let us recover this value from \eqref{eq: eps intertwining upper bound} and even prove that $C_P (\mu_\alpha ,1) = 4/\alpha^2$ for all $\alpha \in (0,1]$ (the case $\alpha >1$ could be addressed as well, but we are only able to find the bounds $4/\alpha^2 \leq C_P (\mu_\alpha,1)  \leq 4$, as suggested by the computations below). Given some $\varepsilon \in \R$, we have for all $x\in \R$,
$$
M_{1,\veps} (x) = \veps(1-\veps)+\frac{1 - \veps }{(1+e^x)^2} \, \left((\alpha+1)(1-2\veps) e^x +\veps  (\alpha^2-1)\right). 
$$
Choosing $0 \leq \veps \leq 1/2$ entails that $M_{1,\veps}$ starts being increasing, then reaches its maximum and becomes decreasing. In particular this implies that
\begin{equation*}
    \inf_{x\in \R}M_{1,\veps} (x) = \min\left \{\lim_{x\rightarrow -\infty}M_{1,\veps}(x),\lim_{x\rightarrow \infty}M_{1,\veps}(x)\right \}
    = \veps(1-\veps) \min \{ \alpha^2,1 \} = \veps(1-\veps) \alpha^2.
\end{equation*}
Optimizing then in $\veps$ leads by (\ref{eq: eps intertwining upper bound}) to the desired inequality $C_P (\mu_\alpha,1) \leq 4/\alpha^2$. 
To get the reverse inequality, we choose in \eqref{eq: Courant} the centered functions $f_\eta(x)=(1+e^{- x})^\eta- 1/(\alpha-\eta)$, with $\eta <\alpha/2$ to ensure that $f_\eta\in H^1(\mu_\alpha ,1)$. After computations, it yields
\begin{align*}
\frac{\int_\R f_\eta ^2 \, d\mu_\alpha}{\int_\R (f_\eta')^2 \, d\mu_\alpha }  = \frac{\frac{\alpha}{\alpha-2\eta} - \frac{\alpha^2}{(\alpha-\eta)^2}}{\frac{2 \alpha  \eta^2}{(\alpha-2\eta)(\alpha+1-2\eta)(\alpha+2-2\eta)}} 
 = \frac{(\alpha+1-2\eta)(\alpha+2-2\eta) }{2  (\alpha-\eta)^2},
\end{align*}
and taking the limit $\eta\rightarrow \alpha/2$, we thus obtain $C_P(\mu_\alpha ,1)\geq 4/\alpha^2$.} 
\end{Ex}

\begin{Ex} \rm{Let us concentrate now on the Pareto distribution, which is a (one-sided) heavy-tailed 
probability measure
somewhat similar to the generalized Cauchy distribution. Given two parameters $z>0$ and $\alpha >0$, denote $\mu_{z,\alpha}$ the probability measure with Lebesgue density on $[z,\infty)$ defined by $x\mapsto  \alpha z^\alpha / x^{\alpha+1}$. We show that $C_P(\mu_{z,\alpha},w) = 4 /\alpha^2$ when the weight is $w(x)=x^2$ (in particular it does not depend on the parameter $z$, as expected when using a trivial scaling argument). Hence we can assume in the sequel $z=1$. Note that it differs from the weight emphasized in Table \ref{table: weights}. By \eqref{eq: intertwining function Joulin Bonnefont Ma} we have for all $\veps \in \R$, \[ 
M_{w,\veps }(x) = (1-\veps) \, x^2 \, \left(  \frac{\veps(\alpha+1)^2-(\alpha+1)}{x^2} \right) , \quad x > 1. \] To ensure the positivity of $M_{w,\veps}$, we choose $\veps \in (1/(\alpha+1), 1)$. Optimizing then in $\veps$ and using \eqref{eq: eps intertwining upper bound} yields \[ C_P(\mu_{1,\alpha},\weight) \leq \frac{4}{\alpha^2}. \] To get the converse inequality, choose the centered functions $f_\eta(x) = x^\eta- \alpha /(\alpha - \eta)$, with $\eta<\alpha/2$ (so that they belong to $H_1(\mu_{1,\alpha},\weight)$). Indeed after some computations, we get 
$$ 
\frac{\int_{1}^\infty f_\eta ^2 \, d \mu_{1,\alpha} }{\int_{1}^\infty w \, (f_\eta') ^2 \, d \mu_{1,\alpha}}
= \frac{\frac{\alpha}{\alpha-2\eta}-\left(\frac{\alpha}{\alpha-\eta}\right)^2}{\frac{\alpha\, \eta^2}{\alpha-2\eta}} =\frac{1}{(\alpha-\eta)^2}, 
$$ and since the right-hand side converges to $4/\alpha^2$ when $\eta \rightarrow \alpha/2$, this proves the desired inequality $C_P(\mu_{1,\alpha},w) \geq 4/\alpha^2$. Again in this situation, the weighted Poincar\'e inequality  does admit any saturating function. Indeed, if it was the case, then there would be some smooth non-null centered function $f\in H^1 (\mu_{1,\alpha}, w)$ such that
\[\left\{\begin{array}{l}
     L_w f(x)=x^2f''(x)+(1-\alpha)x f'(x)=-\frac{\alpha^2}{4}f(x),\quad x>1, \\
     f'(1) = 0.
\end{array} \right.\]
Then one can prove that such a centered function is given by 
$$
f(x)= x^{\frac{\alpha}{2}} \, \left( 1 - \frac{\alpha}{2} \, \log(x) \right) , \quad x\geq 1 , 
$$
but it is not an eigenfunction since $f\notin L^2(\mu_{1,\alpha})$.
}
\end{Ex}

\begin{Ex}
\rm{We consider on $(0,1)$ the probability measure $\mu_\alpha$ with density $\rho(x)= \alpha x^{\alpha-1}$ ($\alpha >0$) and the asymmetric weight $\weight (x) =x^2 (1-x^{\alpha/2})$, $x\in [0,1]$. Although the density might vanish at the boundary (or even be not defined at 0), it causes no trouble for the forthcoming analysis. The same remark holds for the next example. Notice that when $\alpha=1$, $\mu_\alpha$ stands for the uniform distribution. This time the identity \eqref{eq: intertwining function Joulin Bonnefont Ma} is not relevant since $\inf M_{\weight,\veps}\leq 0$ for all $\veps \in \R$. However we are able to prove that $C_P (\mu_\alpha, w) = 4/\alpha^2$ by using \eqref{eq: intertwining function chen} with some convenient function $h'$. More precisely, since we have 
$$
L_w f(x)=x^2(1-x^{\alpha/2})f''(x)+x\left(\alpha+1-\left(\frac{3 \alpha}{2} +1\right)x^{\alpha/2}\right)f'(x), \quad x\in (0,1),
$$
then considering $h'(x) = x^{-\delta}$ with $\delta \in \R$ yields
$$
M_{w,h} (x) = -(\alpha-\delta+1)(1-\delta)+ \left( 1+\frac{\alpha}{2} - \delta \right)\left(\frac{3}{2}\alpha-\delta+1\right)x^{\alpha/2}, \quad x\in (0,1),
$$
which is non-negative as soon as $\delta \in [1, 1+\alpha/2]$. Choosing then $\delta= 1+ \alpha /2$ entails by \eqref{eq: intertwining upper bound} the bound $C_P(\mu_\alpha,\weight)\leq 4/\alpha^2$. \\ To obtain the lower bound we consider the function $f_\eta(x)=x^{-\eta}-\alpha/(\alpha-\eta)$, with $\eta<\alpha/2$ (so that $f_\eta\in H^1(\mu_\alpha,\weight)$) and compute:
\[\frac{\int_0^1 f_\eta ^2 \, d\mu_\alpha}{\int_0^1 \weight \, (f_\eta ')^2 \, d \mu_\alpha} =\frac{\frac{\alpha}{\alpha-2\eta}-\frac{\alpha^2}{(\alpha-\eta)^2}}{\frac{\alpha^2 \eta^2 }{2(\alpha-2\eta)\left(\frac{3}{2}\alpha-2\eta\right)} } = \frac{3 \alpha-4\eta}{\alpha(\alpha-\eta)^2}.\]
By taking the limit $\eta\rightarrow \alpha/2$ we establish that $C_P(\mu_\alpha,\weight)\geq 4/\alpha^2$. Note however that an associated eigenfunction does not exist since the non-null centered function $f(x)=x^{-\alpha/2} - 2$, which solves the equation 
$$
L_w f(x) = - \frac{\alpha^2}{4} \, f(x) , \quad x\in (0,1),
$$
does not belong to $L^2(\mu_\alpha)$.}
\end{Ex}

\begin{Ex}
\rm{
As a last example we consider the symmetric beta distribution on $(-1,1)$ whose density is $\rho (x) = (1-x^2)^{\beta-1} /Z_\beta$, where $\beta >0$ and $Z_\beta$ is the normalization constant $Z_\beta = \int_{-1}^1 (1-x^2)^{\beta-1} dx$. Using the spectral approach, it is known that $\mu_\beta$ satisfies a weighted Poincar\'e inequality with weight $w_0 (x) = 1-x^2$ and Poincar\'e constant $C_P (\mu_\beta , w_0) = 1/2\beta$. In particular linear functions (through Jacobi polynomials of degree 1), are the saturating functions, see \textit{e.g.} \cite{BGL}. However we wonder if such an inequality still hold with another weight for which the spectral analysis of the underlying operator does not give immediately the expression of the Poincar\'e constant. Actually, we are able to prove below that a weighted Poincar\'e inequality holds with weight $w (x) = (1-x^2)^2$ and corresponding Poincar\'e constant $C_P (\mu_\beta , w) = 1/\beta ^2$ for $\beta \in (0,1]$ (for $\beta >1$ we obtain directly $C_P (\mu_\beta , w) = 1/(2\beta -1)$ since the increasing centered function $x\mapsto x/\sqrt{1-x^2} \in L^2 (\mu_\beta)$ is an eigenfunction associated to the eigenvalue $2\beta-1$ of the operator $-L_w$ below).
First, as in the previous example, the identity \eqref{eq: intertwining function Joulin Bonnefont Ma} does not allow us to get the desired result so that let us rather use the equation \eqref{eq: intertwining function chen} with some other convenient test function $h$. Since the associated operator is given for all $x\in (-1,1)$ by  
\[
L_w f(x)=(1-x^2)^2 f''(x)-2(\beta+1)x(1-x^2)f'(x),
\]
we obtain by plugging in \eqref{eq: intertwining function chen} the function $h'(x)=(1-x^2)^{\delta}$, 
\[
M_{\weight,h} (x) = 2(\delta+\beta+1) (1 - (3+2\delta )x^2 ),
\]
whose minimum on $[-1,1]$ is reached at $x = \pm 1$ and positive as soon as $\delta \in (-1-\beta, -1)$. Choosing finally $\delta = -1-\beta/2$ implies by \eqref{eq: intertwining upper bound} the upper bound $C_P(\mu_\beta,\weight) \leq 1/\beta^2$. } \\
On the other hand, we consider the function $f_\eta(x)=(1-x^2)^\eta- Z_{\eta+\beta} / Z_\beta$ (with $\eta > -\beta/2$ to ensure $f_\eta\in H^1(\mu_\beta,\weight)$) and compute:
\begin{align*}
\frac{\int_{-1}^1 f_\eta ^2 \, d\mu_\beta}{\int_{-1}^1 \weight \, (f_\eta ')^2 \, d \mu_\beta} & = \frac{\frac{Z_{2\eta+\beta}}{Z_\beta} - \left( \frac{Z_{\eta+\beta}}{Z_\beta} \right) ^2}{\frac{2\eta^2}{2\eta + \beta} \times \frac{Z_{2\eta + \beta +1}}{Z_\beta}}.
\end{align*}
Then the continuity of $a \mapsto Z_a$ on $(0,\infty)$ together with an integration by parts leading to the following identity: for all $a>0$,
$$
Z_{a+1} = \frac{2a}{1+2a} \, Z_a,
$$
entails that we finally have 
$$
\frac{\int_{-1}^1 f_\eta ^2 \, d\mu_\beta}{\int_{-1}^1 \weight \, 
(f_\eta ')^2 \, d \mu_\beta} \underset{\eta \to - \beta /2}{\sim} \frac{1}{\beta^2}.
$$
Hence we get the reverse inequality $C_P(\mu_\beta,\weight)\geq 1/\beta^2$. However for every $\beta \in (0,1]$ there is no associated eigenfunction. Indeed, the related problem is to find some smooth non-null centered function $f\in H^1(\mu_\beta,\weight)$ such that
$$
L_w f (x) = - \beta^2 f(x) , \quad x\in (-1,1).
$$
Rewriting $f(x)=(1-x^2)^{-\beta/2}g(x)$, this is equivalent to find some solution $g$ such that the following Legendre equation equation holds: for all $x\in (-1,1)$, 
\[
(1-x^2)g''(x)-2xg'(x)=-\beta(\beta+1) g(x).
\]
For $\beta \in (0,1)$, solutions are linear combinations of Legendre functions of the first and second kind, the first one admitting finite limits at $\pm 1$ whereas the second one, denoted $Q_\beta$, is singular at the endpoints $\pm 1$, see \cite{Handbook}. In particular we have the asymptotics $Q_\beta (x) \underset{x\to 1}{\sim} - \log (1-x) /2 $ so that the resulting function  $f(x)=(1-x^2)^{-\beta/2}g(x)$ does not belong to $L^2(\mu_\beta)$. Finally for $\beta = 1$, solutions to the Legendre equation are linear (\textit{i.e.}, degree 1 Legendre polynomials) but also in this case we have $f \notin L^2(\mu_\beta)$.
\\ 
To conclude, let us prove that for every $\beta>0$, the exponent $2$ in $\weight(x)=(1-x^2)^2$ is the largest one leading to a weighted Poincar\'e inequality. Suppose by contradiction that there exists some $\veps>0$ such that $\weight_\veps(x)=(1-x^2)^{2+\veps}$ is an admissible weight. By \eqref{eq: Courant} applied to the function $f_\eta(x)=(1-x^2)^\eta- Z_{\eta+\beta} /Z_{\beta}$, $\eta>-\beta/2$, it follows that
\[C_P(\mu,\weight_\veps)\geq  \frac{Z_{2\eta+\beta}-\frac{Z_{\eta+\beta}^2}{Z_\beta}}{4\eta^2\left(Z_{2\eta+\beta+\veps}-Z_{2\eta+\beta+\veps+1}\right)},\]
which tends to infinity for each fixed $\veps >0$ as $\eta\rightarrow -\beta/2$, leading thus to a contradiction.
\end{Ex}

\section{Link with global sensitivity analysis}
\label{sect:GSA_link}
\label{Link with Global Sensitivity Analysis}
Let us turn our attention to the consequences of weighted Poincar\'e inequalities for sensitivity analysis. First we provide the proof of the inequality involving total Sobol and weighted DGSM indices defined earlier in the introduction. Subsequently we deal with the equality case and emphasize a stability condition ensuring the sharpness of the upper bounds and introduce data-driven weights, together with a uniform consistency result. At the end, we address the Poincar\'e chaos approach to produce lower bounds for total Sobol indices.

In the whole section, we consider a random vector $X=(X_1,\dots,X_d) \in \R^d$ of independent input variables and the output $f(X) \in L^2$, where $f:\R^d \to \R$ is some function referring to the model. Recall that for a set $I\subset \{1,\dots,d\}$ the notation $X_{I}$ stands for the random vector defined by the variables $X_{j}$, $j\in I$. As defined in the introduction, the total Sobol indices are given by
\begin{equation*}
S_i^\tot (f(X)) = \frac{\Var(f_i^\tot(X))}{\Var(f(X))} ,
\end{equation*}
where 
\begin{equation} 
\label{eq:fiTotFormula}
f_i^\tot(X) = \sum_{I\ni i}f_I(X_I) = f(X) - \Esp \left[ f(X) \br X_{-i}\right],
\end{equation}
the random vector $X_{-i}$ of dimension $d-1$ being defined by $X_{-i}:=X_{\{1,\dots,d\}\setminus\{i\}}$. Note that the second equality in \eqref{eq:fiTotFormula}, which is important in the forthcoming analysis, comes from the fact that if $I$ is a superset of $\{i\}$, then by the assumptions of the Sobol-Hoeffding decomposition,
$$ 
\Esp[f_I(X_I) \vert X_{-i}] = 
\Esp[f_I(X_I) \vert X_{I \setminus \{i \}}] = 0.
$$

Finally, given $i\in \{ 1, \ldots, d \}$ and some convenient non-negative weight  $\weight_i$, we further assume that $\left( \weight_i(X_i) \right)^{1/2} \frac{\partial f}{\partial x_i}(X) \in L^2$. Then, the weighted DGSM index related to the distribution of the input $X_i$ is defined as 
$$
\DGSMi (f(X)) = \Esp\left[\weight_i(X_i)\left(\frac{\partial f}{\partial x_i}(X) \right)^2\right] .
$$
To simplify the notation, we write $S_i^\tot$ for $S_i^\tot (f(X))$ and $\nu_{i,\weight_i}$ for $\nu_{i,\weight_i} (f(X))$.

\subsection{Upper bounds for total Sobol indices}
\label{sect: upper bound}
The following proposition deals with the inequality announced in the introduction, linking total Sobol and weighted DGSM indices. It generalizes the result presented in \cite{lamboni} for classical Poincar\'e inequalities.
\begin{prop}
\label{prop: upper bound}
Given $i \in \{ 1,\ldots, d \}$, assume that the distribution $\mu_i$ of the input variable $X_i$ belongs to $\probClassi$. Let $\weight_i \in \funClassWeighti$ be a weight function and suppose that $\mu_i$ satisfies the following
weighted Poincar\'e inequality:  for all centered function $g\in H^1(\mu_i,\weight_i)$,
\begin{equation}
\label{eq: poincar\'eGSA}
\Esp\left[g(X_i)^2\right] \leq C_P(\mu_i,\weight_i) \, \Esp\left[\weight_i(X_i)g'(X_i)^2\right] .
\end{equation}
We also assume that $\left( \weight_i(X_i) \right)^{1/2} \frac{\partial f}{\partial x_i}(X) \in L^2$ where $f(X) \in L^2 $. Then, 
\begin{equation}
  \label{eq: prop upper bound}
      S_i^\tot\leq C_P(\mu_i,\weight_i) \, \frac{\DGSMi}{\Var(f(X))}.
  \end{equation}
\end{prop}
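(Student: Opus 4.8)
The plan is to apply the weighted Poincar\'e inequality \eqref{eq: poincar\'eGSA} conditionally on the remaining variables $X_{-i}$, exploiting the representation $f_i^\tot(X) = f(X) - \Esp[f(X) \br X_{-i}]$ from \eqref{eq:fiTotFormula}. First I would freeze a value $x_{-i}$ of $X_{-i}$ and introduce the univariate slice
\[
g_{x_{-i}}(x_i) := f(x_i, x_{-i}) - \Esp\left[f(X) \br X_{-i} = x_{-i}\right].
\]
By independence of $X_i$ and $X_{-i}$, the conditional expectation equals $\int f(x_i, x_{-i}) \, \mu_i(dx_i)$, so $g_{x_{-i}}$ is centered with respect to $\mu_i$. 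Moreover $\frac{\partial}{\partial x_i} g_{x_{-i}} = \frac{\partial f}{\partial x_i}(\cdot, x_{-i})$, since the subtracted term does not depend on $x_i$, and on the event $\{X_{-i} = x_{-i}\}$ one has $g_{x_{-i}}(X_i) = f_i^\tot(X)$.

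Next I would apply \eqref{eq: poincar\'eGSA} to $g_{x_{-i}}$ for (almost) every fixed $x_{-i}$, which after recognizing the left-hand side as a conditional second moment yields
\[
\Esp\left[(f_i^\tot(X))^2 \brr X_{-i}\right] \leq C_P(\mu_i,\weight_i) \, \Esp\left[\weight_i(X_i)\left(\frac{\partial f}{\partial x_i}(X)\right)^2 \brr X_{-i}\right].
\]
Taking expectation over $X_{-i}$ and invoking the tower property gives $\Esp[(f_i^\tot(X))^2] \leq C_P(\mu_i,\weight_i) \, \DGSMi$. Finally, since each $f_I(X_I)$ with $I \ni i$ is centered by the non-overlapping property, we have $\Esp[f_i^\tot(X)] = 0$, hence $\Var(f_i^\tot(X)) = \Esp[(f_i^\tot(X))^2]$; dividing by $\Var(f(X))$ then produces exactly \eqref{eq: prop upper bound}.

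The main technical point to secure is that the conditional application of the weighted Poincar\'e inequality is legitimate, \textit{i.e.} that for $\mu_{-i}$-almost every $x_{-i}$ the slice $g_{x_{-i}}$ belongs to $H^1(\mu_i,\weight_i)$. This follows from a Fubini argument: the standing assumption $\left(\weight_i(X_i)\right)^{1/2}\frac{\partial f}{\partial x_i}(X) \in L^2$ forces $\int \weight_i(x_i)\left(\frac{\partial f}{\partial x_i}(x_i,x_{-i})\right)^2 \mu_i(dx_i) < \infty$ for almost every $x_{-i}$, which is precisely the finiteness of the weighted Dirichlet energy of $g_{x_{-i}}$; combined with $f(X) \in L^2$, this places $g_{x_{-i}}$ in $H^1(\mu_i,\weight_i)$ almost surely and makes the conditional second moment integrable in $x_{-i}$. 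I expect this integrability and measurability bookkeeping to be the only delicate part, the remainder being a direct conditional use of \eqref{eq: poincar\'eGSA} followed by the tower property.
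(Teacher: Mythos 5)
Your proposal is correct and follows essentially the same route as the paper: freezing $x_{-i}$, applying the weighted Poincar\'e inequality to the centered slice $g_{x_{-i}}(x_i) = f(x) - \Esp[f(X) \br X_{-i}=x_{-i}]$, and integrating over $x_{-i}$. Your Fubini argument merely makes explicit the almost-sure membership $g_{x_{-i}} \in H^1(\mu_i,\weight_i)$ that the paper asserts directly from the standing assumptions on $f$.
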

\begin{proof}
For all $x_{-i} \in \prod_{j\neq i} [a_j, b_j]$, let $g _{x_{-i}}$ be the one-dimensional function 
$$g _{x_{-i}} : x_i \in [a_i, b_i] \mapsto f_i^\tot(x).$$
Notice that, from \eqref{eq:fiTotFormula}, we can write $g _{x_{-i}}(x_i) = f(x) - \Esp[f(X)\br X_{-i}=x_{-i}]$. Under this form, it is clear that $g _{x_{-i}}$ is a centered function of $L^2(\mu_i)$ and verifies
$$
g_{x_{-i}}'(x_i) = \frac{\partial f}{\partial x_i}(x).$$
Under the assumptions on $f$, this latter equality shows that $g _{x_{-i}}$ belongs to $H^1(\mu_i, \weight_i)$. Then the weighted Poincar\'e inequality \eqref{eq: poincar\'eGSA} applied to $g_{x_{-i}}$ gives
$$ \Esp \left(f_i^\tot(X)^2 \vert X_{-i} = x_{-i} \right)
\leq C_P(\mu_i,\weight_i) \, \Esp  \left[ \left. \weight_i(X_i) \left( \frac{\partial f}{\partial x_i}(X) \right)^2 \right\vert X_{-i} = x_{-i} \right].$$
Integrating over $x_{-i}$ with respect to the product measure $\otimes_{j \neq i} \mu_j$ and using the fact that $f_i^\tot$ is centered, we get
$$ \Var \left(f_i^\tot(X) \right)
\leq C_P(\mu_i,\weight_i) \, \Esp  \left[ \weight_i(X_i) \left( \frac{\partial f}{\partial x_i}(X) \right)^2  \right].
$$
Dividing by $\Var(f(X))$ concludes the proof.
\end{proof}

\subsection{Case of equality in the upper bound and stability}
\label{Case of equality in the upper bound and stability}
Looking carefully at the proof of Proposition \ref{prop: upper bound} above, we observe that a sufficient condition ensuring the equality in \eqref{eq: prop upper bound} 
is that the one-dimensional function $x_i \mapsto f_i^\tot(x)$ saturates the weighted Poincar\'e inequality \eqref{eq: poincar\'eGSA} for each fixed $x_{-i} \in \R^{d-1}$. Since the space of saturating functions is one-dimensional (it corresponds to the eigenspace related to the spectral gap of the underlying diffusion operator), the function $f_i^\tot$ is then required to be of the following form: 
$$
f_i^\tot (x) = g_i (x_i) v (x_{-i}), \quad x\in \R^d,
$$
where $g_i \in H^1(\mu_i,\weight_i)$ is some one-dimensional function saturating the weighted Poincar\'e inequality \eqref{eq: poincar\'eGSA} (in particular it is centered with respect to $\mu_i$) and $v_i$ is a function that only depends on $x_{-i}$. In this case the function $f$ rewrites as 
$$
f(x) = u_i (x_{-i}) + g_i(x_i) v_i(x_{-i}), \quad x\in \R^d,
$$
with $u_i (x_{-i}) = \Esp [f(X) \br X_{-i} = x_{-i}]$. The next proposition further provides a stronger result in terms of stability of the inequality \eqref{eq: prop upper bound}, at least when the one-dimensional function considered is close to the function saturating the Poincar\'e inequality \eqref{eq: poincar\'eGSA} in the space $H^1 (\mu_i,w_i)$ (it corresponds below to the case $\veps > 0$).

\begin{prop}
\label{prop: weightstability}
Under the notation and assumptions of Proposition \ref{prop: upper bound}, let fix $i\in \{ 1,\ldots, d \}$ and let $f$ be of the form $f(x) = u_i(x_{-i}) + h_i (x_i) v_i(x_{-i})$, where $u_i, v_i$ are functions depending only on $x_{-i}$, such that $u_i (X_{-i}) , v_i (X_{-i}) \in L^2$, and $h_i \in H^1(\mu_i,\weight_i)$ is centered with respect to $\mu_i$. Let further assume that there exists $\veps \geq 0$ such that 
\[
\Esp\left[\weight_i(X_i) \left( h_i'(X_i)-g_i'(X_i) \right) ^2\right]\leq \veps,
\]
where $g_i$ is a function saturating the weighted Poincar\'e inequality \eqref{eq: poincar\'eGSA}. Then we have the following stability result:
\[
0 \leq   C_P(\mu_i,\weight_i)\frac{\DGSMi}{\Var(f(X))}-S_i^\tot\leq C_P(\mu_i,\weight_i) \, \frac{\Esp\left[v_i^2(X_{-i})\right]}{\Var(f(X))} \, \veps.\]
In particular, if $f$ has the form $f(x)= u_i(x_{-i}) + g_i(x_i) v_i(x_{-i})$, then \eqref{eq: prop upper bound} is an equality. 
\end{prop}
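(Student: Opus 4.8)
The plan is to collapse the multivariate stability estimate onto a purely one-dimensional spectral identity. First I would substitute the prescribed form $f(x) = u_i(x_{-i}) + h_i(x_i) v_i(x_{-i})$ into the two quantities appearing in \eqref{eq: prop upper bound}. Because $h_i$ is centered with respect to $\mu_i$ and $X_i$ is independent of $X_{-i}$, the conditional expectation $\Esp[f(X)\br X_{-i}]$ reduces to $u_i(X_{-i})$, so by \eqref{eq:fiTotFormula} we get $f_i^\tot(X) = h_i(X_i)v_i(X_{-i})$, and differentiating gives $\frac{\partial f}{\partial x_i}(X) = h_i'(X_i)v_i(X_{-i})$. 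Using independence to split each expectation into a product (a routine Fubini step, legitimate under the stated $L^2$ hypotheses) yields $\Var(f_i^\tot(X)) = \Esp[h_i(X_i)^2]\,\Esp[v_i(X_{-i})^2]$ and $\DGSMi = \Esp[\weight_i(X_i)h_i'(X_i)^2]\,\Esp[v_i(X_{-i})^2]$. The common factor $\Esp[v_i(X_{-i})^2]$ then pulls out, so after dividing by $\Var(f(X))$ the claimed two-sided bound is equivalent to the single one-dimensional statement
\[
0 \leq C_P(\mu_i,\weight_i)\,\Esp[\weight_i(X_i)h_i'(X_i)^2] - \Esp[h_i(X_i)^2] \leq C_P(\mu_i,\weight_i)\,\veps.
\]

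The left inequality is nothing but the weighted Poincar\'e inequality \eqref{eq: poincar\'eGSA} applied to the centered $h_i$ (equivalently, it is Proposition \ref{prop: upper bound}), so the substance lies in the upper bound. Here I would set $r := h_i - g_i$, which is again centered and lies in $H^1(\mu_i,\weight_i)$, and introduce the bilinear form $\mathcal{B}(\phi,\psi) := \Esp[\weight_i(X_i)\phi'(X_i)\psi'(X_i)]$. Expanding the quadratic expression $C_P(\mu_i,\weight_i)\,\Esp[\weight_i h_i'^2] - \Esp[h_i^2]$ along $h_i = g_i + r$ splits it into a pure-$g_i$ term, a cross term, and a pure-$r$ term. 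The pure-$g_i$ term $C_P(\mu_i,\weight_i)\,\mathcal{B}(g_i,g_i) - \Esp[g_i^2]$ vanishes because $g_i$ saturates \eqref{eq: poincar\'eGSA}. The cross term $2\bigl(C_P(\mu_i,\weight_i)\,\mathcal{B}(g_i,r) - \Esp[g_i r]\bigr)$ vanishes by the variational characterization \eqref{eq: variational formulation} of the saturating eigenfunction $g_i$ applied to the test function $r$. What survives is exactly $C_P(\mu_i,\weight_i)\,\mathcal{B}(r,r) - \Esp[r^2]$.

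To close, note that $\Esp[r(X_i)^2] \geq 0$ while $\mathcal{B}(r,r) = \Esp[\weight_i(X_i)(h_i'(X_i)-g_i'(X_i))^2] \leq \veps$ by hypothesis, so the surviving remainder is at most $C_P(\mu_i,\weight_i)\,\veps$. This establishes the one-dimensional inequality and hence the proposition. The equality claim is then immediate: taking $h_i = g_i$ forces $r \equiv 0$, so the remainder is zero and \eqref{eq: prop upper bound} becomes an equality. I expect the one delicate point to be the cancellation of the cross term, since it is not plain $L^2(\mu_i)$-orthogonality but the \emph{mixed} identity $C_P(\mu_i,\weight_i)\,\mathcal{B}(g_i,r) = \Esp[g_i r]$, which holds precisely because $g_i$ is the eigenfunction attached to the spectral gap (equivalently $-L_{\weight_i}g_i = \lambda_1 g_i$ with $\lambda_1 = 1/C_P(\mu_i,\weight_i)$); everything else is the product-expectation splitting, valid under the assumed integrability of $u_i(X_{-i})$, $v_i(X_{-i})$, and $f(X)$.
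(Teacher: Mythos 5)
Your proof is correct and follows essentially the same route as the paper: the paper also reduces to the nonnegative quadratic form $V(\varphi,\psi)=C_P(\mu_i,\weight_i)\,\Esp[\weight_i\varphi'\psi']-\Esp[\varphi\psi]$, kills the pure-$g_i$ term by saturation and the cross term by the variational identity \eqref{eq: variational formulation}, and bounds the remainder $V(h_i-g_i,h_i-g_i)\leq C_P(\mu_i,\weight_i)\,\veps$ before factoring out $\Esp[v_i^2(X_{-i})]$ by independence. Your expansion of $h_i=g_i+r$ is just the bilinear rearrangement of the paper's computation of $V(h_i-g_i,h_i-g_i)$, so the two arguments coincide.
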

\begin{proof}
Denote $V$ the symmetric bilinear form acting on the space of centered functions in $H^1 (\mu_i, w_i)$ as follows: 
$$
V(\varphi,\psi ) = C_P(\mu_i,\weight_i)\,\Esp\left[ w_i (X_i) \, \varphi'(X_i) \, \psi '(X_i) \right] - \Esp\left[ \varphi(X_i) \, \psi (X_i) \right] . 
$$
By the weighted Poincar\'e inequality \eqref{eq: poincar\'eGSA}, the induced quadratic form is non-negative. Moreover we have 
$$
V(h_i-g_i , h_i - g_i ) = V(h_i, h_i) + V(g_i , g_i ) - 2\, V(h_i , g_i).
$$
Since the function $g_i$ saturates \eqref{eq: poincar\'eGSA}, we have $V(g_i , g_i ) = 0$ and also  $V(h_i , g_i) = 0$, using the variational identity \eqref{eq: variational formulation}. Hence we get 
\begin{equation} \label{eq: Vh_maj}
0 \leq \,  V(h_i , h_i) = V(h_i-g_i , h_i - g_i ) 
\leq C_P(\mu_i,\weight_i)\,\veps.
\end{equation}
Notice that $\frac{\partial f}{\partial x_i}(x) = h_i'(x_i) \, v_i(x_{-i})$ and $f_i^\tot(x) = h_i(x_i)v_i(x_{-i})$.
Then, by independence of $X_i$ and $X_{-i}$, we obtain:
$$
C_P(\mu_i,\weight_i) \, \frac{\DGSMi}{\Var(f(X))} - S_i^\tot = \frac{\Esp\left[v_i^2(X_{-i})\right]}{\Var(f(X))} \, V(h_i,h_i).
$$
Combining with \eqref{eq: Vh_maj} concludes the proof.
\end{proof}

Although $w_i$ can be any general weight in $\funClassWeighti$ \textit{a priori}, the requirement that the function $g_i$ saturates the weighted Poincar\'e inequality \eqref{eq: poincar\'eGSA} enforces $w_i$ to be proportional to the weight $w_{g_i}$ constructed according to Theorem \ref{teo: weight}. Indeed, it is not difficult to prove that necessarily $w_i = w_{g_i}/ C_P (\mu_i,w_i)$.

\subsection{Data-driven weights for monotonic main effects}
\label{Data-driven weights}

Important quantities arising in GSA are the first-order indices, or main effects, which allow to understand the influence of each variable individually over the model. For each $i\in \{ 1, \ldots, d \}$, the main effect $f_i (X_i)$ (or simply $f_i$, using a slight abuse of language) of the input variable $X_i$ is defined as 
$$
f_i (X_i) = \Esp \left[ f(X) \br X_i \right] - \Esp \left[ f(X)\right].
$$
By definition of the conditional expectation, $f_i(X_i)$ is the best centered $L^2$ approximation of $f(X)$ by a one-dimensional function depending on $X_i$. It is thus reasonable to use $f_i$ for building $w_i$. If $f_i$ belongs to $\funClassTwoi$, implying that $f_i$ is strictly monotonic, a natural candidate for $w_i$ is $w_{f_i}$ since by Theorem~\ref{teo: weight}, $f_i$ saturates the corresponding weighted Poincar\'e inequality. By Proposition~\ref{prop: weightstability}, this further implies that the total Sobol indices will be perfectly approximated by weighted DGSM ones involving the weight $w_{f_i}$ for functions of the form $f(x) = u_i(x_{-i}) + h_i (x_i) v_i(x_{-i})$. Indeed, then we necessarily have $f_i (x_i) = h_i (x_i) \, \mathbb{E} [v_i (X_{-i})]$, and (provided that $\mathbb{E} [v_i (X_{-i})] \neq 0$) $h_i$ is proportional to $f_i$, corresponding to an equality case in Proposition \ref{prop: upper bound}. \\
In practice, however, $f_i$ is unknown and must be estimated. The idea is to use some convenient pointwise estimator $\weightSTIMi$ of $\weight_{f_i}$, \textit{i.e.}, $\weightSTIMi(x_i)$ is an estimator of $\weight_{f_i}(x_i)$ for all $x_i \in [a_i,b_i]$. A natural candidate for this data-driven weight $\weightSTIMi$ is obtained by first approximating $f_i$ with a pointwise strictly monotonic estimator $\fSTIMi$, almost surely centered with respect to $\mu_i$, and then considering $\weightSTIMi=\weight_{\fSTIMi}$. The consistency of this statistical procedure is proved in the following proposition.
\begin{prop}
\label{prop:consistency}
Given $i \in \{ 1,\ldots, d \}$ and a finite interval $[a_i,b_i]$, we assume that $\mu_i \in \probClassi$ and that the main effect $f_i \in \funClassTwoi$ satisfies $f'_i(a)\neq 0$ and  $f'_i(b)\neq 0$. Moreover we assume that $\frac{\partial f }{\partial x_i}(X)\in L^2 $. \\
Let $(X_1,\ldots, X^\nn)$ be an i.i.d. sample generated from $X$ and consider an estimator $\fSTIMi$ constructed with respect to this sample. We assume the following (almost sure) hypothesis:
\begin{itemize}
\item[(i)] $\fSTIMi$ is centered with respect to $\mu_i$. 
\item[(ii)] $\fSTIMi \in \funClassTwoi$ satisfies $\fSTIMi '(a)\neq 0$ and  $\fSTIMi '(b)\neq 0$. 
\item[(iii)] $\fSTIMi$ converges uniformly to $f_i $ on $[a_i,b_i]$ as $n\rightarrow \infty$.
\item[(iv)] $\fSTIMi'$ converges uniformly to $f_i '$ on $[a_i,b_i]$ as $n\rightarrow \infty$.
\end{itemize}
Then almost surely, the data-driven weight $\weightSTIMi=\weight_{\fSTIMi}$ converges uniformly to $\weight_{f_i}$ on $[a_i,b_i]$ as $n\rightarrow \infty$. Moreover, the Monte-Carlo estimator 
\[
\widehat{\nu_{i}}=\frac{1}{n}\sum_{k=1}^n \widehat{\weight_{i}}(X_i^\kk)\abs{\frac{\partial f}{\partial x_i}(X^\kk)}^2\]
converges almost surely to $\nu_{i,\weight_{f_i}}$ as $n\rightarrow \infty$.
\end{prop}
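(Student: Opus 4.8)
The plan is to view the map $g\mapsto\weight_g$ of Theorem \ref{teo: weight} as a continuous operation in the uniform topology on the pair $(g,g')$, establish the first (pathwise) claim this way, and then transfer it to the Monte-Carlo estimator through the strong law of large numbers. Throughout I work on the almost sure event on which hypotheses (iii) and (iv) hold.

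\textbf{Uniform convergence of the weights.} For an eligible function $g$ I would write the weight as the quotient
\[
\weight_g(x) = -\frac{\Gamma_g(x)}{g'(x)\rho(x)}, \qquad \Gamma_g(x) := \int_{a_i}^x g(y)\rho(y)\,dy,
\]
and control numerator and denominator separately. Since $\rho$ is a probability density, (iii) gives
\[
\norminf{\Gamma_{\fSTIMi} - \Gamma_{f_i}} \leq \norminf{\fSTIMi - f_i}\int_{a_i}^{b_i}\rho(y)\,dy = \norminf{\fSTIMi - f_i} \longrightarrow 0,
\]
while (iv) gives $\norminf{\fSTIMi'\rho - f_i'\rho} \leq \norminf{\rho}\,\norminf{\fSTIMi' - f_i'}\to 0$. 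The essential point is a uniform lower bound on the denominator: as $f_i\in\funClassTwoi$, the derivative $f_i'$ is continuous and of constant sign on $(a_i,b_i)$, and being nonvanishing at the endpoints by assumption it satisfies $m := \min_{[a_i,b_i]}\abs{f_i'} > 0$; likewise $\rho$ is continuous and positive on the compact $[a_i,b_i]$ (recall $\rho(a_i),\rho(b_i)>0$), so $m_\rho := \min_{[a_i,b_i]}\rho > 0$. By (iv), for $n$ large $\norminf{\fSTIMi' - f_i'} < m/2$, hence $\abs{\fSTIMi'\rho} \geq (m/2)\,m_\rho > 0$ uniformly. I would then conclude with the elementary quotient identity
\[
\frac{A_n}{B_n} - \frac{A}{B} = \frac{A_n - A}{B_n} - \frac{A\,(B_n - B)}{B_n B},
\]
applied with $A_n = \Gamma_{\fSTIMi}$, $A = \Gamma_{f_i}$ (bounded, being continuous on a compact), $B_n = \fSTIMi'\rho$ and $B = f_i'\rho$: both right-hand terms tend to $0$ uniformly, using the numerator and denominator convergences above together with the uniform bounds $1/\abs{B_n}, 1/\abs{B} \leq 2/(m\,m_\rho)$. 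This yields $\norminf{\weightSTIMi - \weight_{f_i}}\to 0$; at the endpoints both weights vanish (since $\Gamma(a_i)=0$ and the denominators are nonzero there), in agreement with Theorem \ref{teo: weight}.

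\textbf{Monte-Carlo convergence.} I would split
\[
\widehat{\nu_{i}} = \frac{1}{n}\sum_{k=1}^n \weight_{f_i}(X_i^\kk)\abs{\frac{\partial f}{\partial x_i}(X^\kk)}^2 + \frac{1}{n}\sum_{k=1}^n \bigl(\weightSTIMi - \weight_{f_i}\bigr)(X_i^\kk)\abs{\frac{\partial f}{\partial x_i}(X^\kk)}^2 .
\]
The summands of the first sum are i.i.d.\ and lie in $L^1$, since $\weight_{f_i}$ is bounded on $[a_i,b_i]$ and $\frac{\partial f}{\partial x_i}(X)\in L^2$; the strong law gives almost sure convergence of this term to $\Esp\left[\weight_{f_i}(X_i)\left(\frac{\partial f}{\partial x_i}(X)\right)^2\right] = \nu_{i,\weight_{f_i}}$. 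The second term is bounded in absolute value by
\[
\norminf{\weightSTIMi - \weight_{f_i}} \cdot \frac{1}{n}\sum_{k=1}^n \abs{\frac{\partial f}{\partial x_i}(X^\kk)}^2 ,
\]
whose first factor tends to $0$ almost surely by the previous step and whose empirical average tends almost surely to $\Esp\left[\left(\frac{\partial f}{\partial x_i}(X)\right)^2\right]<\infty$ by the strong law; the product therefore vanishes almost surely, giving $\widehat{\nu_{i}}\to\nu_{i,\weight_{f_i}}$ almost surely.

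\textbf{Main obstacle.} The delicate point is entirely in the first step: passing from uniform closeness of $(\fSTIMi,\fSTIMi')$ to uniform closeness of the weights requires dividing by $g'\rho$, so one must keep this factor uniformly away from zero along the whole sequence. This is precisely where the hypotheses $f_i'(a_i)\neq 0$, $f_i'(b_i)\neq 0$ (forcing $f_i'$ to be bounded below on the \emph{closed} interval) and the uniform convergence of derivatives (iv) enter. A secondary subtlety is that $\weightSTIMi$ depends on the entire sample, so the summands of $\widehat{\nu_i}$ are not i.i.d.; the decomposition in the second step circumvents this by isolating a genuine i.i.d.\ average and absorbing the remainder into $\norminf{\weightSTIMi - \weight_{f_i}}$, thereby decoupling the sample dependence.
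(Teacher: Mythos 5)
Your proof is correct and takes essentially the same route as the paper's: uniform convergence of the numerator $\int_{a_i}^{x_i}\fSTIMi\rho_i\,dy$ together with a uniform positive lower bound on the denominator $\fSTIMi'\rho_i$ (coming from $f_i'(a_i),f_i'(b_i)\neq 0$, compactness, and hypothesis (iv)), followed by exactly the paper's decomposition of $\widehat{\nu_{i}}$ into a genuine i.i.d.\ average treated by the strong law of large numbers and a remainder bounded by $\norminf{\weightSTIMi - \weight_{f_i}}$ times an empirical mean of $\abs{\frac{\partial f}{\partial x_i}(X^\kk)}^2$. Your explicit quotient identity and your remark that the summands of $\widehat{\nu_i}$ are not i.i.d.\ merely spell out details the paper leaves implicit.
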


\begin{proof}
Using (i) and (ii), the weight $\weightSTIMi = \weight_{\fSTIMi}$ is defined in Theorem \ref{teo: weight} for all $x_i\in [a_i, b_i]$ by
\begin{align}
\weightSTIMi(x_i)
=-\frac{1}{\fSTIMi'(x_i)\rho_i(x_i)}\int_{a_i}^{x_i} \fSTIMi(y)\rho_i(y)\,dy. \label{eq: weight gn}
\end{align}
Since a.s. $\fSTIMi$ converges uniformly to $f_i$ on $[a_i,b_i]$ as $n\to \infty$, it follows that a.s. the sequence $x_i \mapsto \int_{a_i} ^{x_i} \fSTIMi(y)\rho_i(y)\,dy$ also converges uniformly to $x_i\mapsto 
\int_{a_i} ^{x_i} f_i(y)\,\rho_i(y)\,dy$ on $[a_i,b_i]$. By (ii), $\fSTIMi$ and $f_i$ are $\C^1$ on the finite interval $[a_i, b_i]$. Since $\fSTIMi'$ tends uniformly to $f_i'$ and $\fSTIMi'$ is of constant sign, the function $\vert \fSTIMi' \vert $ is bounded from below by a positive constant that does not depend on $n$. The same argument holds for $\vert f_i '\vert $ as well. Using that $\rho_i$ is also bounded from below by some positive constant, this further implies that  $1/(\fSTIMi' \rho_i) $ converges uniformly to $1/(f_i' \rho_i) $ on $[a_i,b_i]$. This concludes the proof of the desired a.s. uniform convergence of $\weightSTIMi$. \\
Now the proof of the convergence of the Monte-Carlo estimator $\widehat{\nu_{i}}$ to the weighted DGSM index $\nu_{i,w_{f_i}}$ is straightforward: decomposing $\widehat{\nu_{i}}$ as
\begin{align*}
\scaleobj{.96}{
\displaystyle
    \widehat{\nu_{i}}=\frac{1}{n}\sum_{k=1}^n \left(\widehat{\weight_{i}}(X_i^\kk)-\weight_{f_i}(X_i^\kk)\right)\abs{\frac{\partial f}{\partial x_i}(X^\kk)}^2+\frac{1}{n}\sum_{k=1}^n \weight_{f_i}(X_i^\kk)\abs{\frac{\partial f}{\partial x_i}(X^\kk)}^2,
    }
\end{align*}
we observe on the first hand that the first term converge a.s. to 0 (since a.s. $\weightSTIMi$ converges uniformly to $\weight_{f_i}$ on $[a_i,b_i]$ as $n\rightarrow \infty$ and $\frac{\partial f}{\partial x_i}(X)\in L^2$) whereas on the other hand the second term converges a.s. to $\nu_{i,w_{f_i}}$ by the strong law of large numbers. The proof is now complete.
\end{proof}

Note that the assumptions of Proposition \ref{prop:consistency} are rather strong. A difficulty is to ensure  
that $\fSTIMi'$ can be bounded from below by a positive constant that does not depend on the sample size $n$. This is guaranteed by the uniform convergence of the derivatives $\fSTIMi'$. Notice also that Assumption (iii) may be replaced by
\begin{itemize}
    \item[(iii')] there exists some $c_i \in [a_i,b_i]$ such that $\fSTIMi (c_i)$ converges to $f_i (c_i)$ as $n\rightarrow \infty$
\end{itemize}
since (iii') and (iv) imply (iii). This is however a particular case, and we have chosen to keep here the most general assumption.

Note also that the relevance of the data-driven weight methodology depends crucially on the construction of the strictly monotonic $\fSTIMi$ which estimates the main effect $f_i$. In practice it is difficult to provide such a construction ensuring the monotonicity and the uniform consistency properties at the same time. See however the paper \cite{Neumeyer2007} which explains how to modify a given uniformly consistent estimator to obtain a monotonic version of it. In particular the uniform consistency property is preserved with the same rate of convergence.

To achieve the discussion about data-driven weights, we point out that the previous approach can not be applied in theory when the main effect $f_i$ is not monotonic. Indeed, the weight $\weight_{f_i}$ is not well defined and the required convergence properties of the estimator $\fSTIMi$ are not guaranteed. However in practice such a procedure can be adapted when the strictly monotonic estimator $\fSTIMi$ remains close to $f_i$ (in the weighted Sobolev space related to $\weightSTIMi$), so that the numerical results obtained with the data-driven weight $\weightSTIMi$ may be somewhat relevant in this context. 

\subsection{Approximations of total Sobol indices with Poincar\'e chaos expansions}
\label{chaos}

In the previous sections, our main objective was to bound from above the total Sobol indices with the weighted DGSM ones. However when we consider a
weight $w_i$ that does not vanish on the finite interval $[a_i,b_i]$, the associated Sturm-Liouville problem is regular and the underlying diffusion operator $L_{w_i}g = (\weight_i g' \rho_i)'/{\rho_i}$ admits a discrete spectral decomposition. Collecting all these decompositions for all $i \in \{ 1, \ldots, d \}$, such a decomposition considered on the product space leads to the so-called Poincar\'e chaos expansion and allows us to expand $\Var (f_i ^{\tot} (X))$. As such, truncating the expansion yields to lower bounds on the total Sobol index $S_i ^{\tot}$. Below we provide further details on this approach involving weights, which 
is a simple adaptation of the one presented in \cite{Chaos2,PoincareChaos} and related to classical Poincar\'e inequalities.

Before stating our desired expansion, we need to fix some elements. For each $i \in \{ 1,\ldots, d\}$, recall that each $\mu_i\in \probClassi$ stands for the distribution of the input variable $X_i$. Let $w_i\in \funClassWeighti$ which does not vanish at the boundaries. Denote $(\lambda_{i,n})_{n\in \N}$ the eigenvalues listed in increasing order of the diffusion operator $-L_{w_i}$. We have $\lambda_{i,0} = 0$ and $\lambda_{i,1} = 1/C_P (\mu_i,w_i)$ is the spectral gap. Let $(e_{i,n})_{n\in \N}$ be the associated orthonormal basis of eigenfunctions (recall that $e_{i,0}\equiv 1$), the normalization being understood in $L^2(\mu_i)$.

In higher dimension we consider the product measure $\mu = \mu_1 \otimes \dots  \otimes \mu_d$ of the input vector $X$. Letting $L^2(\mu)$ be the space of square-integrable functions on the cartesian product $\prod_{i=1}^d[a_i, b_i]$ with respect to $\mu$, we write the scalar product of two given functions $\varphi, \psi\in L^2(\mu)$ as $\langle \varphi,\psi\rangle = \int \varphi\,\psi\,d\mu$. Then the diffusion operator related to $\mu$ is the sum of the one-dimensional operators $-L_{w_i}$, each acting only on the $i$-th coordinate of a given multi-dimensional function, the other coordinates being fixed. Then the collection of all tensor-product functions $e_{\alpha}: x\mapsto \prod_{i=1}^d e_{i, \alpha_i}(x_i)$, with
$\alpha=(\alpha_1,\dots,\alpha_d)\in \N^d$ a multi-index, form an orthonormal basis of $L^2(\mu)$ and correspond to the eigenfunctions of this multi-dimensional diffusion operator, each eigenfunction $e_\alpha$ being related to the eigenvalue $\sum_{i=1}^d \lambda_{i,\alpha_i}$. Hence every function in $L^2 (\mu)$ admits a decomposition in terms of the functions $(e_{\alpha})_{\alpha \in \N^d}$, known as the Poincar\'e chaos expansion expansion (PoinCE). Applied to the function $f$, we have
$$
f = \sum_{\alpha \in \N^d} \langle f,e_{\alpha} \rangle \, e_{\alpha}.
$$
Since we have for all $x\in \prod_{i=1}^d[a_i, b_i]$,
$$
f_i^\tot (x) = f(x) - \int_{a_i} ^{b_i} f(x_1,\ldots,  x_i,  \ldots, x_d) \, \mu_i(dx_i) , 
$$
its PoinCE has the particular form  
$$
f_i ^{\tot} = \sum_{\substack{\alpha \in \N^d \\ \alpha_i\geq 1}} \langle f,e_{\alpha} \rangle \, e_{\alpha}.
$$
Hence the variance of the random variable $f_i ^{\tot} (X)$ can be expanded using Parseval's identity: 
\begin{align}
        \Var(f_i^\tot (X)) =   \sum_{\substack{\alpha \in \N^d \\ \alpha_i\geq 1}}\langle f, e_{\alpha} \rangle^2 .
        \label{eq: chaos approx}
    \end{align}
The advantage of using such a basis $(e_{\alpha})_{\alpha \in \N^d}$ rather than any other one resides in the fact that it provides an alternative expansion in terms of the derivatives of $f$, in the spirit of the one-dimensional identity \eqref{eq: variational formulation}: for all $i \in \{1,\ldots, d \}$ and all $n\in \N$, 
\[
\langle f,e_{i,n}\rangle=\frac{1}{\lambda_{i,n}} \, \langle \weight_i \, \frac{\partial f}{\partial x_i},e'_{i,n} \rangle.
\]
The next proposition summarizes these facts. The proof is somewhat similar to the one established in \cite{PoincareChaos}.
\begin{prop}
\label{prop: Chaos}
Assume that the weighted DGSM index $\DGSMi$ is finite. Then the Parseval's identity \eqref{eq: chaos approx} can be rewritten using derivatives as:
\begin{equation}
    \Var(f_i^\tot (X))=\sum_{\substack{\alpha \in  \N^d \\ \alpha_i\geq 1}}\frac{1}{\lambda^2_{i,\alpha_i}} \, \langle \weight_i \frac{\partial f}{\partial x_i},e'_{i,\alpha_i} \prod_{j\neq i}^d e_{j,\alpha_j} \rangle^2.
    \label{eq: chaos approx der}
\end{equation}
\end{prop}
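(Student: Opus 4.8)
The plan is to establish, for each multi-index $\alpha\in\N^d$ with $\alpha_i\geq 1$, the coefficient identity
\[
\langle f, e_{\alpha}\rangle = \frac{1}{\lambda_{i,\alpha_i}}\,\Big\langle \weight_i\,\frac{\partial f}{\partial x_i},\, e'_{i,\alpha_i}\textstyle\prod_{j\neq i} e_{j,\alpha_j}\Big\rangle,
\]
and then to square it and sum against the Parseval expansion \eqref{eq: chaos approx}, which immediately produces \eqref{eq: chaos approx der}.

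First I would record the one-dimensional ingredient, namely the slicewise analogue of \eqref{eq: variational formulation} valid for \emph{all} eigenfunctions (this is exactly the identity displayed just before the statement). Since $\weight_i$ does not vanish at the boundaries, the Sturm--Liouville problem is regular and each eigenfunction satisfies $-L_{w_i}e_{i,n}=\lambda_{i,n}e_{i,n}$ together with the Neumann conditions $\weight_i(a_i)e'_{i,n}(a_i)\rho_i(a_i)=\weight_i(b_i)e'_{i,n}(b_i)\rho_i(b_i)=0$. Hence, by the symmetry of $-L_{w_i}$ and the integration-by-parts identity underlying \eqref{eq: positivity}, for every $g\in H^1(\mu_i,\weight_i)$ and every $n$ with $\lambda_{i,n}>0$,
\[
\lambda_{i,n}\int_{a_i}^{b_i} g\, e_{i,n}\, d\mu_i = \int_{a_i}^{b_i} g\,(-L_{w_i}e_{i,n})\, d\mu_i = \int_{a_i}^{b_i} \weight_i\, g'\, e'_{i,n}\, d\mu_i,
\]
so that $\int_{a_i}^{b_i} g\, e_{i,n}\, d\mu_i = \lambda_{i,n}^{-1}\int_{a_i}^{b_i}\weight_i\, g'\, e'_{i,n}\, d\mu_i$.

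Next I would lift this to dimension $d$. Writing $e_{\alpha}(x)=e_{i,\alpha_i}(x_i)\prod_{j\neq i}e_{j,\alpha_j}(x_j)$, using $\mu=\mu_1\otimes\cdots\otimes\mu_d$ and setting $\mu_{-i}=\bigotimes_{j\neq i}\mu_j$, Fubini allows me to integrate first in the $i$-th variable:
\[
\langle f, e_{\alpha}\rangle = \int\Big(\int_{a_i}^{b_i} f(x)\, e_{i,\alpha_i}(x_i)\, d\mu_i(x_i)\Big)\prod_{j\neq i}e_{j,\alpha_j}(x_j)\, d\mu_{-i}(x_{-i}).
\]
For almost every fixed $x_{-i}$ the slice $x_i\mapsto f(x)$ lies in $H^1(\mu_i,\weight_i)$ and, as noted in the proof of Proposition \ref{prop: upper bound}, its weak derivative equals $\partial f/\partial x_i$; applying the one-dimensional identity to this slice rewrites the inner integral as $\lambda_{i,\alpha_i}^{-1}\int_{a_i}^{b_i}\weight_i\,(\partial f/\partial x_i)\, e'_{i,\alpha_i}\, d\mu_i$. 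Factoring out $\lambda_{i,\alpha_i}^{-1}$ and recombining the integrals by Fubini yields the claimed coefficient identity, and substituting into \eqref{eq: chaos approx} finishes the proof.

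The main obstacle is not algebraic but lies in justifying that almost every slice belongs to $H^1(\mu_i,\weight_i)$ and that the two interchanges of integration are licit; this is precisely where the hypothesis $\DGSMi<\infty$ enters. Writing $\DGSMi=\int \weight_i\,(\partial f/\partial x_i)^2\, d\mu$, Tonelli's theorem guarantees that $\int_{a_i}^{b_i}\weight_i\,(\partial f/\partial x_i)^2\, d\mu_i<\infty$ for almost every $x_{-i}$, so almost every slice is indeed in $H^1(\mu_i,\weight_i)$; combined with $f\in L^2(\mu)$ (which makes the slices lie in $L^2(\mu_i)$ almost everywhere) and a Cauchy--Schwarz bound on $\langle \weight_i(\partial f/\partial x_i),\, e'_{i,\alpha_i}\prod_{j\neq i}e_{j,\alpha_j}\rangle$, it controls all the integrands and legitimizes the use of Fubini.
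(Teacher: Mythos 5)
Your proof is correct and follows essentially the same route as the paper's, which is not spelled out there but deferred to \cite{PoincareChaos}: one establishes the tensorized coefficient identity $\langle f, e_{\alpha}\rangle = \lambda_{i,\alpha_i}^{-1}\,\langle \weight_i \frac{\partial f}{\partial x_i},\, e'_{i,\alpha_i}\prod_{j\neq i} e_{j,\alpha_j}\rangle$ by applying slicewise the one-dimensional spectral identity (the extension of \eqref{eq: variational formulation} to all eigenfunctions, valid in the regular Sturm--Liouville setting since $\weight_i$ does not vanish at the boundaries), and then squares and sums it in Parseval's identity \eqref{eq: chaos approx}. Your Tonelli argument showing that almost every slice lies in $H^1(\mu_i,\weight_i)$ and the Cauchy--Schwarz bound legitimizing the Fubini interchanges, both resting on the hypothesis $\DGSMi<\infty$, supply exactly the integrability details the weighted adaptation requires.
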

In practice, the decompositions \eqref{eq: chaos approx} and \eqref{eq: chaos approx der} have to be appropriately truncated, leading to approximations of the Sobol index $S_i^\tot$. Thus, the key point is how to choose the truncation and then to estimate the various scalar products. Certainly, keeping only few terms in the Poincar\'e chaos expansion might provide relevant numerical results, as we will observe in Section \ref{sect:Appli} when dealing with our toy models and the real flood application. However it is not an easy task in full generality as soon as the dimension is large since the computational cost is high. A first step in this direction has been proposed in \cite{Chaos2} in the context of classical Poincar\'e inequalities using sparse regression. In particular this approach allows to compute simultaneously the scalar products involved in the decompositions \eqref{eq: chaos approx} and \eqref{eq: chaos approx der}. Dealing with the weighted case, we intend to apply the same methodology in a future work.

As a last remark, we mention that the data-driven approach emphasized in Section \ref{Data-driven weights} is hardly available at this stage when using the Poincar\'e chaos expansion, since in the latter case the data-driven weight $\weightSTIMi$ is required to be positive on $[a_i,b_i]$. In other words, it means that the derivatives of the estimator $\fSTIMi$ should vanish at the boundary, a property which is hard to guarantee in practice.  

\subsection{Summary and guidelines for choosing weights to GSA}
Before turning to Section \ref{sect:Appli} and the applications to various models arising in GSA, let us for completeness summarize our approach based on weighted Poincar\'e inequalities. Recall that for a given $i \in \{ 1,\ldots,d\}$ the total Sobol indices $S_i^\tot$ can be bounded from above by an expression involving the weighted DGSM index $\nu_{i,w_i}$ with an appropriate weight $w_i$ (see Section \ref{sect: upper bound}). Lower bounds on $S_i^\tot$ are also available when using the Poincar\'e chaos expansion, as emphasized in the last section. To do so, we study the spectral properties of the diffusion operator $L_{\weight_i}g = (\weight_i g' \rho_i)'/\rho_i$ depending on the weight $w_i$ which has to be constructed conveniently.

When no particular knowledge about the model $f$ is provided, the most information we can expect may be given by the one-dimensional $L^2$ centered approximation of $f(X)$ corresponding to the main effect $f_i (X_i)$ in the Sobol Hoeffding decomposition. Then we can tune the (one-dimensional) weight $\weight_i$ such that the first non-null eigenfunction $e_i$ of the operator $-L_{\weight_i}$ (constructed with respect to $w_i$) is close to $f_i$. Notice however that this approximation is constrained by the structural properties of $e_{i}$, in particular its monotonicity.

In many engineering problems, $f_i$ may be approximated by a linear function. This justifies the idea of choosing $\weightId$ 
(see Section \ref{sect:linear}) by default or its Gaussian approximation $\weightG$ (see Section \ref{sect:non_vanishing}). Nevertheless, as a first step of GSA, it is often possible to visualize the main effects and to estimate them from a sample of small size, see for instance \cite{GSA_book}. Thus it is natural to consider the data-driven weight $\weightSTIMi$ built from a strictly monotonic estimator $\fSTIMi$ of $f_i$ (see Section \ref{Data-driven weights}). As mentioned earlier, we illustrate this data-driven approach only for bounding from above the total Sobol indices, as their estimation from Poincar\'e chaos expansions requires that the derivatives of the estimator $\fSTIMi$ vanishes at the boundary, a property which is difficult to ensure in practice. A first idea in this direction could be to use a Gaussian process approach for constraints on boundedness, monotonicity and convexity, cf. e.g.  \cite{Bachoc}.

\section{Applications}
\label{sect:Appli}
\label{Applications}

\subsection{Numerical settings}
In this final part, we present numerical applications
of our results on two toy models and a more realistic one given by a flood case. To emphasize the role of the input variables $X_i$, $i\in \{1,\dots,d\}$, we rewrite the weights presented in Section \ref{Result for optimal weights} as $\weightIdi$, $\weightGi$ and $\weightUi$. In addition to $\weightIdi$ and $\weightSTIMi$, we consider also the non-vanishing weights $\weightUi$ and $\weightGi$ (see Section \ref{sect:non_vanishing}) for comparison purposes when bounding from above total Sobol indices. For the lower bounds using Poincar\'e chaos expansion (understood in this part as approximations), we employ only the weights $\weightGi$ and $\weightUi$, which ensure the existence of an orthonormal basis of eigenfunctions. 
We refer to truncations of the infinite sum in \eqref{eq: chaos approx} as derivative-free approximations (abbreviated Der-free) and to truncations of the one in \eqref{eq: chaos approx der} as derivative-based approximations (abbreviated Der-based). All our numerical results
have been done with the R software \cite{R}, and are compared with the ones obtained in the classical unweighted case. We now give more specific details. \smallskip 

\noindent \textit{Eigenvalue and eigenfunction estimation, and \PCE series truncation}
\hfill\\
The eigenvalues and eigenfunctions are computed using a weighted adaptation of the function
\Rnot{PoincareOptimal} from the package \Rnot{sensitivity} \cite{R_sensitivity}, based on a finite element discretization. 
Once this is done, the \PCE approximations can be computed. We approximate each $S_i^\tot$ with truncations of both sums \eqref{eq: chaos approx} and \eqref{eq: chaos approx der} by considering only few terms, according to the following subset of multi-indices:
\[\mathcal{A}_i=\biggr\{\alpha\in \N^d \brr \alpha_i\in\{1,2\},\,\sum_{j\neq i} \alpha_j\leq 1 \biggr\}.\]
In other words, a given $\alpha \in \mathcal{A}_i$ if and only if $\alpha_i\in\{1,2\}$ and there is at most one index $j\neq i$ such that $\alpha_j \leq 1$, all the other ones being null. As we will observe below, this selection is sufficient to obtain relevant results on the three models. \medskip 

\noindent \textit{Weight computation}
\hfill\\
Recall that in our approach, given $i\in \{1,\dots,d\}$, a weight $\weight_i$ is constructed from a strictly monotonic function $g$ which saturates the associated weighted Poincar\'e inequality. For $\weightIdi$ and $\weightUi$, the function $g$ is explicit. For $\weightGi$, as $g$ saturates a classical (unweighted) Poincar\'e inequality, we compute it with the function
\Rnot{PoincareOptimal}, using a sequence of $500$ equally spaced nodes. For $\weightSTIMi$, we choose $g$ as the monotonic estimator $\fSTIMi$ of $f_i$ given by the Shape Constrained Additive Model (SCAM) \cite{SCAM} from a sample of size $150$. 
Finally, given $g$, the weights $\weightIdi$, $\weightGi$, $\weightUi$ and $\weightSTIMi$
are approximated with the numerical method described in Section \ref{Numerical computation}, using a sequence of $500$ equally spaced nodes. \medskip 

\noindent \textit{Monte-Carlo estimation and sample size}
\hfill\\
Monte-Carlo integration is employed for computing the variance, weighted DGSM, and \PCE scalar products, using a sample of input vectors and their corresponding outputs. When the derivatives of the outputs are unknown, we estimate them using finite differences. 
We have used a reasonably small sample size of $150$ for the three models considered, whose dimensions are $5$ for the two first ones, and $8$ for the flood model.
Additionally, due to estimation error in Monte-Carlo simulations, we have also used a larger sample of size $10\,000$ when computing the \PCE approximations, in order to highlight the benefits of incorporating weights. Furthermore, we perform $100$ bootstrap replicates for each estimation of the upper bounds and approximations. They are displayed with boxplots to represent confidence intervals.

\subsection{Illustration with toy models}
The two toy models we consider in this part depend on $X=(X_1,\dots,X_5)$, a vector of i.i.d. random variables with common distribution $\mu_i$ being uniform on $(0,1)$. As such, for each $i\in\{1,\dots,d\}$, the weight $\weightUi$ takes the constant value $\weightUi\equiv C_P(\mu_i,1)=1/\pi^2$ and thus their upper bounds and \PCE approximations match with the classical ones. We exclude $\weightUi$ in the presentation of our results below. \medskip 

\noindent 
\textit{A simple polynomial toy model}
\hfill\\
As a first toy model we propose
\begin{equation} \label{eq:toyModel1}
    f(X) = X_1+X_2^2+X_3^3+X_4^4+X_5^5.
\end{equation}
Then for every $i\in \{1,\dots,d\}$, the function $f_i^\tot$ coincides with the main effect $f_i$ and we have 
$$
f_i^\tot (X_i) = f_i (X_i) = X_i ^i - \frac{1}{1+i}.
$$ 
Then the theoretical Sobol index in this case is
\[
S_i^\tot =\frac{\frac{1}{2i+1}-\frac{1}{(1+i)^2}}{\sum_{j=1}^5 \left(\frac{1}{2j+1}-\frac{1}{(1+j)^2}\right)}.\]
Figure \ref{fig: Model1} displays the upper bounds for total Sobol indices. We can see that for each input variable $X_i$, the data-driven weight $\weightSTIMi$ gives the most accurate result. This is expected because our toy function is a model with separated variables and has monotonic main effects. Indeed, from Proposition \ref{prop: weightstability}, the optimal weight is $\weight_{f_i}$, and the upper bound equals the total Sobol index. Furthermore, the estimation of the main effects is here very accurate, as shown in Figure \ref{figure: tendencies F1}. Note additionally that the classical (unweighted) upper bounds provides the worst result.

\begin{figure}[h!t]
  \centering
  \includegraphics[width=.55\linewidth]{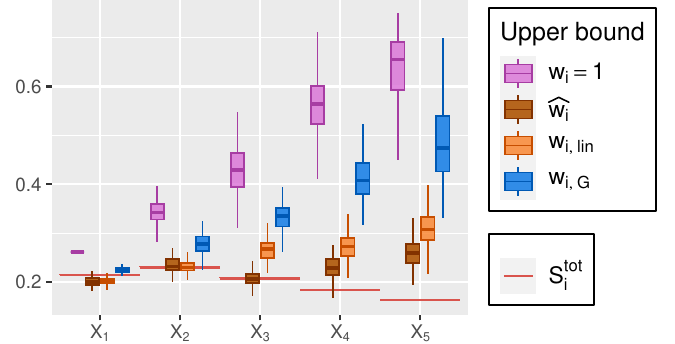}
    \caption{
    Upper bounds on the total Sobol indices 
    for the toy model \eqref{eq:toyModel1}.     Horizontal red bars indicate the true values.
    }
    \label{fig: Model1}
\end{figure}
\begin{figure}[h!t]
    \centering
    \includegraphics[width=.9\linewidth]{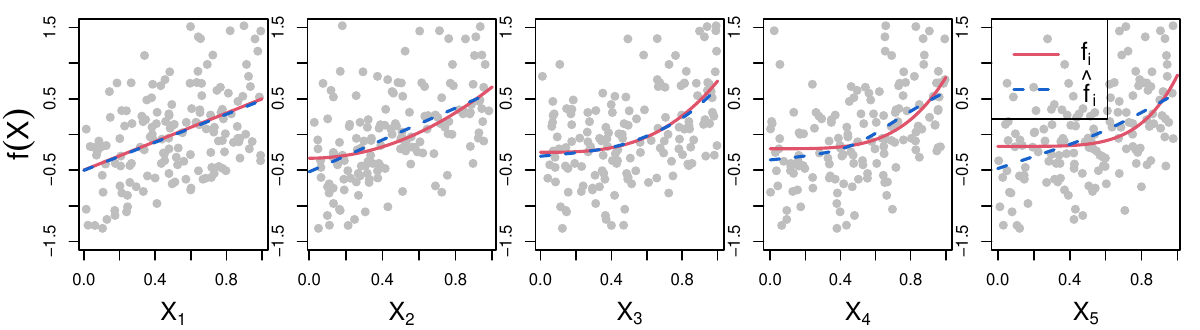}
    \caption{
    Monotonic estimators $\fSTIMi$ of the main effects $f_i$ for the toy model \eqref{eq:toyModel1}. In this figure, the output has been recentered.}
    \label{figure: tendencies F1}
\end{figure}

Next, Figure \ref{fig: Model1 chaos} presents the \PCE approximations of each total Sobol index. The estimations using a large sample size indicate that, for each $X_i$, the weight $\weightGi$ produces the most accurate approximations when comparing with the classical (unweighted) case. Finally, note that the derivative-based approximations exhibit considerably less variance than the derivative-free ones. This is because the
derivatives of the function $f$ have less fluctuations than the function itself.
\begin{figure}[h!t]
    \centering
\begin{subfigure}{.34\textwidth}
  \centering
  \includegraphics[width=.9\linewidth]{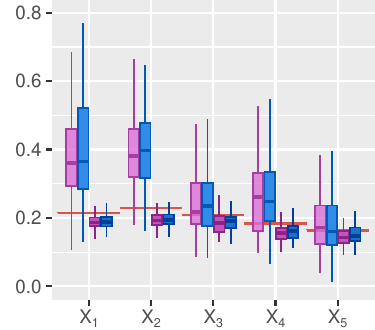}
\end{subfigure}
\begin{subfigure}{.55\textwidth}
  \centering
  \includegraphics[width=1\linewidth]{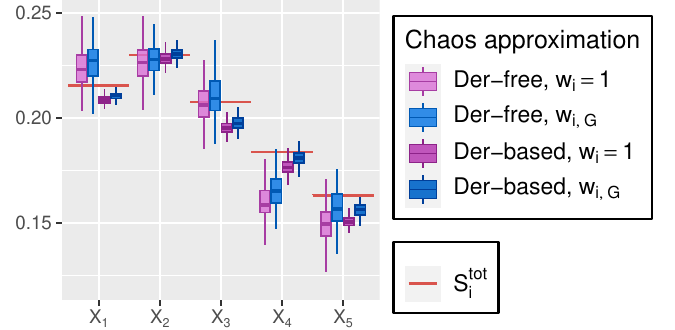}
\end{subfigure}
    \caption{\PCE approximations of the total Sobol indices for the toy model \eqref{eq:toyModel1}, estimated with samples of size $150$ (left) and $10\,000$ (right).
    Horizontal red bars indicate the true values.}
    \label{fig: Model1 chaos}
\end{figure}

\subsubsection*{\textit{A toy model separating variables, with monotonic main effects}}
\hfill \\
Let us propose a second toy model admitting interactions between input variables. Such a model, which might be useful to create toy examples to test the relevance of GSA methodologies, is of the following form: 
\begin{equation}
\label{eq: general model}
    f(X) = \prod_{i=1}^d \left(\frac{h_i(X_i)}{1+a_i} + 1\right),
\end{equation}
where $X=(X_1,\dots,X_d)$ is a vector of independent random variables, $a \in \R^d $ is a constant vector with non-negative coordinates and each $h_i$ is a one-dimensional function such that $\Esp\left[h_i(X_i)\right]=0$. Such formulation is inspired by the $g$-Sobol function, classical in GSA, defined with the particular choice $h_i(x_i)=4\abs{x_i-1/2}-1$. Actually, it is clear that the main effect of each input variable $X_i$ is $f_i(X_i) = h_i(X_i) / (1+a_i)$, so that $a_i$ determines its main influence. Moreover, $a_i$ determines the total influence as well. Indeed, since 
$$f_i^\tot(X) = \frac{h_i(X_i)}{1+a_i} \, \prod_{j\neq i} \left(1+\frac{h_j(X_j)}{1+a_j}\right),
$$ 
we thus obtain after some brief computations, 
$$\Var(f_i^\tot(X)) = \frac{r_i}{(1+a_i)^2} \, \prod_{j\neq i}\left(1+\frac{r_j}{(1+a_j)^2}\right), $$
and
$$\Var(f(X)) = \prod_{j=1}^d \left( 1+ \frac{r_j}{(1+a_j)^2} \right)-1 , 
$$
where $r_i = \Esp \left[ h_i(X_i)^2 \right]$, and one deduces that the total Sobol index $S_i^\tot$ is small as $a_i$ is large. 

We now consider a particular case of 
\eqref{eq: general model}
by choosing $h_i(x_i) = x_i^4 - \frac{1}{5}$ ($i=1, \dots, 5$) and  $a=(1,2,4.5,90,90)$. In particular, each main effect $f_i$ is monotonic.
Figure \ref{fig: Model2} presents the upper bounds. Once again, for each variable $X_i$, the most efficient weight is $\weightSTIMi$. This illustrates the advantage of the data-driven approach with models presenting interactions between variables. 
Note also that for the input variable $X_1$, it provides the unique relevant upper bound (the classical upper bound in the unweighted case $w_i =1$ and the one using $\weight_{i,\text{G}}$ with $i=1$ do not appear in the figure as they are greater than one).
The precision of these upper bounds is explained by the accuracy of each estimator $\fSTIMi$, as we can see in Figure \ref{figure: tendencies F2}.
Concerning the \PCE approximations, the best estimations are once again the ones provided by the weight $\weightGi$ (see Figure \ref{fig: Model2 chaos}).
\begin{figure}[ht]
  \centering
  \includegraphics[width=.52\linewidth]{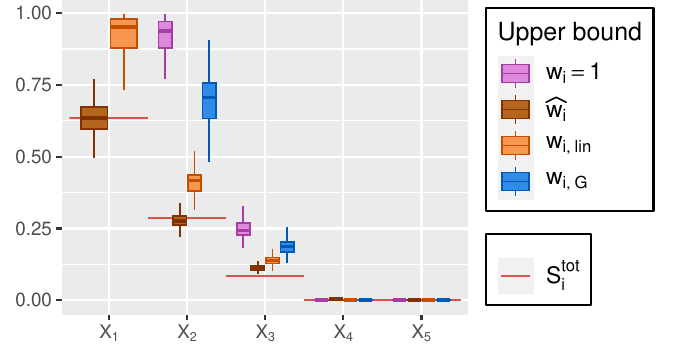}
   \caption{Upper bounds on the total Sobol indices for the toy model \eqref{eq: general model}.
   Horizontal red bars indicate the true values. 
}
    \label{fig: Model2}
\end{figure}
\begin{figure}[ht]
    \centering
    \includegraphics[width=.9\linewidth]{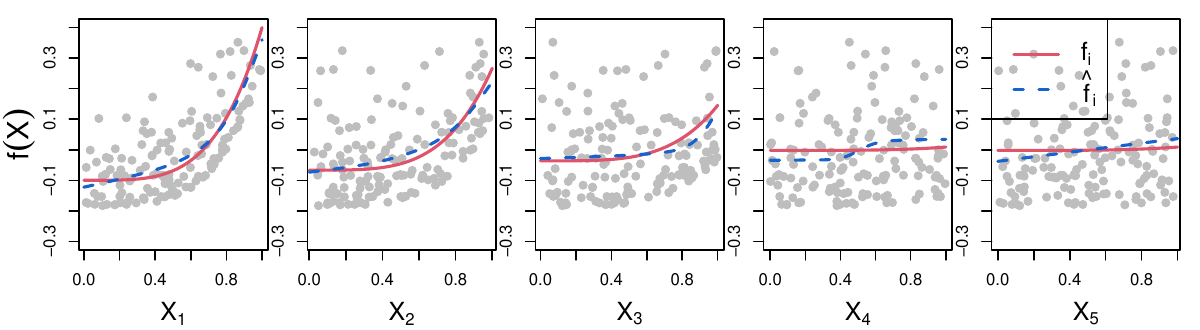}
    \caption{
    Monotonic estimators $\fSTIMi$ of the main effects $f_i$ for the toy model \eqref{eq: general model}. In this figure, the output has been recentered.}
    \label{figure: tendencies F2}
\end{figure}

\begin{figure}[h!t]
    \centering
\begin{subfigure}{.34\textwidth}
  \centering
  \includegraphics[width=.9\linewidth]{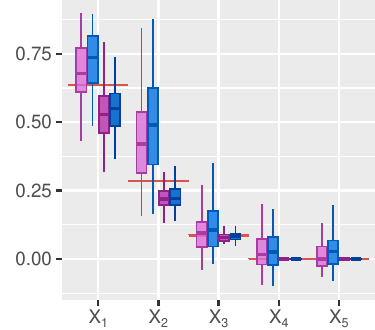}
\end{subfigure}
\begin{subfigure}{.55\textwidth}
  \centering
  \includegraphics[width=1\linewidth]{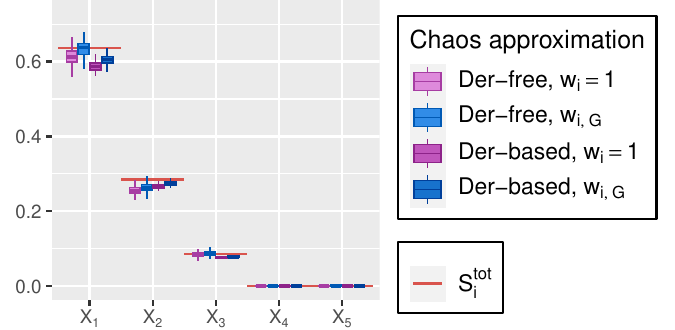}
\end{subfigure}
    \caption{
    \PCE approximations, estimated with samples of sizes $150$ (left) and $10\,000$ (right), of the total Sobol indices for the toy model \eqref{eq: general model}. Horizontal red bars indicate the true values.
    }
    \label{fig: Model2 chaos}
\end{figure}
\subsection{Application to a flood model}
\label{Application to a flood model}
To finish this work, we consider a simplified flood model commonly used to test GSA methodologies. It has been addressed, for instance, in  \cite{iooss2014,Chaos2,poincareintervals,PoincareChaos}.
The outputs of interest are:
\begin{itemize}
    \item the maximal annual overflow (measured in meters)
\begin{equation*}
\label{eq: overflow}
S=Z_v-H_d-C_b+\left(\frac{Q}{B K_s}\sqrt{\frac{L}{Z_m-Z_v}}\right)^{\frac{3}{5}}.
\end{equation*}
\item the annual cost of the dyke maintenance in million of euros
\begin{equation*}
\label{eq: cost}
	C=\mathbbm{1}_{S>0}+\left(0.2+0.8\left(1-e^{-\frac{1000}{S^4}}\right)\right)\mathbbm{1}_{S\leq 0}+\frac{1}{20}\max\left\{ H_d,8\right \}.
\end{equation*}
\end{itemize}
The inputs are supposed to be independent random variables, whose distributions are in accordance with the empirical distributions obtained from measurement campaigns. Details are given in Table \ref{fig: variables flood}.
\begin{table}[H]
\begin{center}
\bgroup
\def\arraystretch{1.2}
\begin{tabular}{llll}
\hline
Input & Meaning & Unit & Probability measure   \\ \hline 
$X_1=Q$ & Max. flow rate & $m^3/s$ & Gumbel $\mathcal{G}(1013, 558)|_{[500, 3000]}$ \\  
$X_2=K_s$ & Strickler coefficient  & --- & Gaussian $\mathcal{N}(30,64)|_{[15,75]}$ \\ 
$X_3=Z_v$ & Downstream level & $m$ & Triangular $\mathcal{T}(49, 50, 51)$\\ 
$X_4=Z_m$ & Upstream level & $m$ & Triangular $\mathcal{T}(54, 55, 56)$ \\
$X_5=H_d$ & Dyke height & $m$ & Uniform $\mathcal{U}(7,9)$\\ 
$X_6=C_b$ & Bank height & $m$ & Triangular $\mathcal{T}(55, 55.5, 56)$ \\
$X_7=L$ & River length & $m$ &  Triangular $\mathcal{T}(4990, 5000, 5010)$ \\ 
$X_8=B$ & River width & $m$ & Triangular $\mathcal{T}(295, 300, 305)$ \\ 
\hline
\end{tabular}
\egroup
\caption{Input variables for the flood model. The notation $|_I$ means that the distribution is truncated on the interval $I$.}
\label{fig: variables flood}
\end{center}
\end{table}
The pdf of the Gumbel distribution $\mathcal{G}(\eta,\beta)$ ($\eta\in \R$, $\beta>0$) and of the triangular distribution $\mathcal{T}(a,c,b)$ ($a<c<b$) are given respectively by:
\begin{equation*}
\label{eq: Gumbel}
    \rho(x)=\frac{1}{\beta}\exp\left(-\frac{x-\eta}{\beta}-\exp\left(-\frac{x-\eta}{\beta}\right)\right),\quad x\in \R,
\end{equation*}
\begin{equation*}
    \rho(x)=\frac{2(x-a)}{(b-a)(c-a)}\mathbbm{1}_{[a,c]}(x)+\frac{2(b-x)}{(b-a)(b-c)}\mathbbm{1}_{(c,b]}(x),\quad x\in \R.
\end{equation*}
For the sake of comparison, we need the total Sobol indices and the main effects. Since closed form expressions can be hardly obtained, we estimate them with a large sample of size $10\,000$ and refer to these estimations as ``true'' values. More precisely, the total Sobol indices are computed with the function {\fontfamily{qcr}\selectfont soboljansen} from the package {\fontfamily{qcr}\selectfont sensitivity} \cite{R_sensitivity} and the main effects are estimated with the function {\fontfamily{qcr}\selectfont loess} (package 
 {\fontfamily{qcr}\selectfont stats}).

We present the results for $S$ and $C$ at the same time, focusing on the variables $Q$, $K_s$, $Z_v$ and $H_d$, which are found to be the most influential. 
Figure \ref{fig: UpperBoth} displays the upper bounds. We see that for each variable $X_i$, the bounds provided by the data-driven weight $\weightSTIMi$ and $\weightIdi$ are sharp and somewhat similar, except for $H_d$ in the output $C$, where the bounds are rather rough. This poor result for $H_d$ may be due to the fact that its main effect has a non-monotonic quadratic shape (see Figure \ref{fig: tendencies both}). Recall that when the main effect is non monotonic, the data-driven weight can be constructed but does not come with a theoretical guarantee. Furthermore, this may cause numerical troubles. For instance, we could not use the SCAM estimator as it produces a flat curve in the region $H_d < 8$ which creates a singularity in the definition of the estimated weight given in \eqref{eq: weight gn}. We have replaced it by an increasing piecewise affine function with knots $H_d=7, 8, 9$.
\begin{figure}[ht]
    \centering
\begin{subfigure}{.35\textwidth}
  \centering
  \includegraphics[width=1\linewidth]{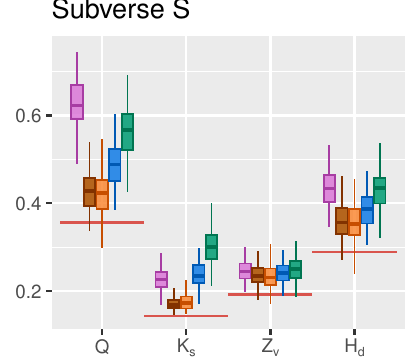}
\end{subfigure}
\begin{subfigure}{.52\textwidth}
  \centering
  \includegraphics[width=1\linewidth]{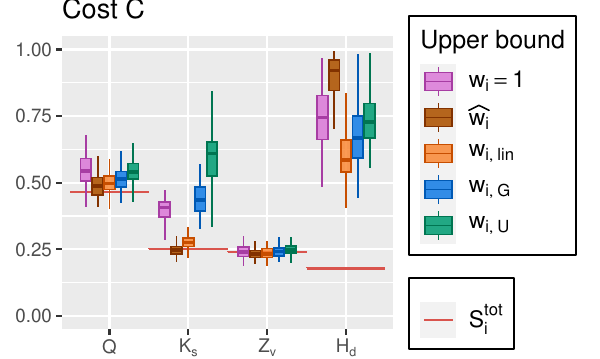}
\end{subfigure}
    \caption{
    Upper bounds of the total Sobol indices for the flood model. Horizontal red bars indicate the true values.
    }
    \label{fig: UpperBoth}
\end{figure}
\begin{figure}[ht]
    \centering
\begin{subfigure}{1\textwidth}
  \centering
  \includegraphics[width=.7\linewidth]{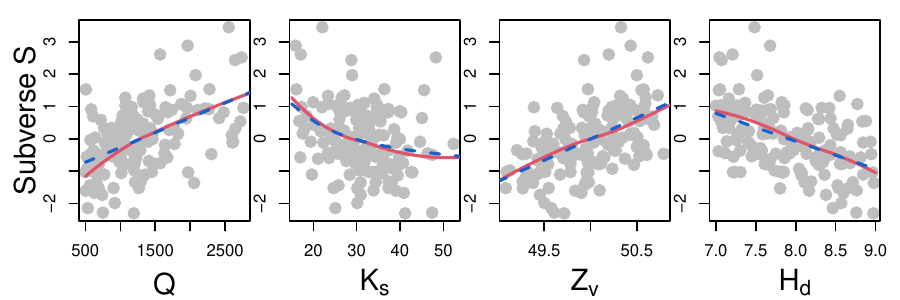}
  \includegraphics[width=.7\linewidth]{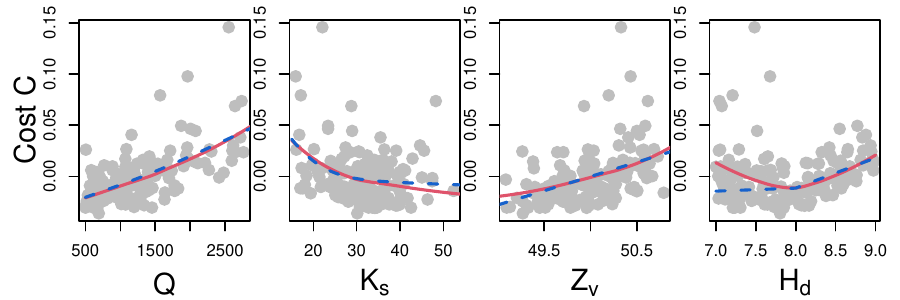}
\end{subfigure}
    \caption{Estimators $\fSTIMi$ (blue lines) of the main effects $f_i$ (red lines) for the recentered outputs of the flood model.}
    \label{fig: tendencies both}
\end{figure}

We present now the \PCE approximations in Figures
\ref{fig: ChaosOutput1} and \ref{fig: ChaosOutput2}.
We obtain relevant and similarly accurate approximations for
each variable. This includes the case of the variable $H_d$ in the output $C$, where
no accurate upper bound was provided. This is remarkable, considering that only few basis functions have been used in the \PCE chaos. Using more basis functions would reduce the slight bias for the output $C$.
\begin{figure}[h!t]
    \centering
\begin{subfigure}{.38\textwidth}
  \centering
  \includegraphics[width=.9\linewidth]{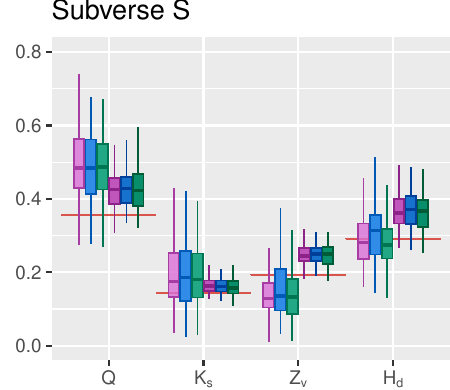}
\end{subfigure}
\begin{subfigure}{.55\textwidth}
  \centering
  \includegraphics[width=1\linewidth]{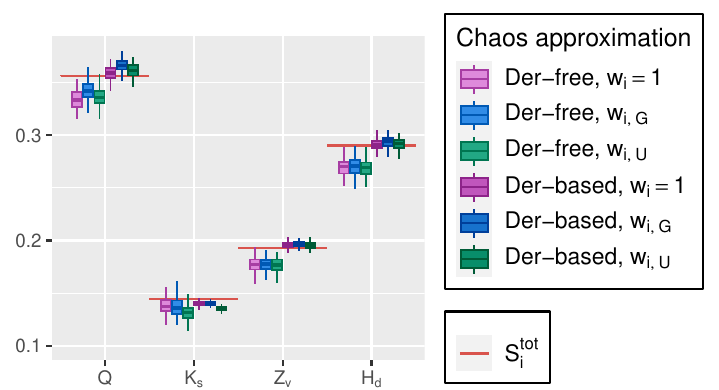}
\end{subfigure}
    \caption{
    \PCE approximations, estimated with samples of sizes $150$ (left) and $10\,000$ (right), of the total Sobol indices for $S$. Horizontal red bars indicate the true values.
}
    \label{fig: ChaosOutput1}
\end{figure}
\begin{figure}[h!t]
    \centering
\begin{subfigure}{.38\textwidth}
  \centering
  \includegraphics[width=.9\linewidth]{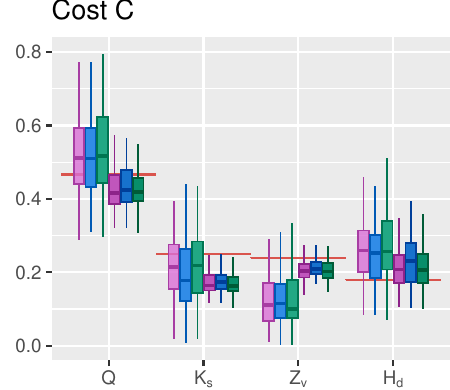}
\end{subfigure}
\begin{subfigure}{.55\textwidth}
  \centering
  \includegraphics[width=1\linewidth]{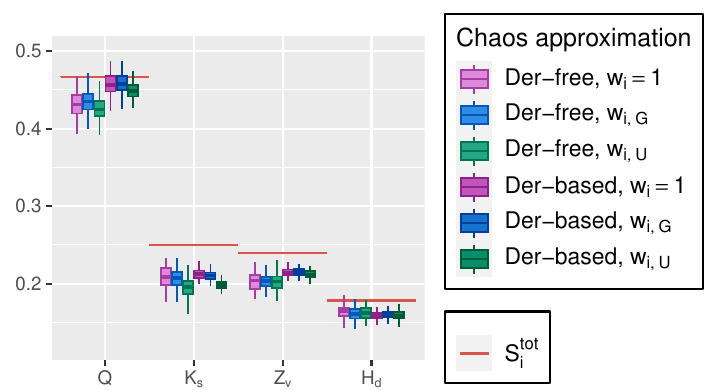}
\end{subfigure}
    \caption{
    \PCE approximations, estimated with samples of sizes $150$ (left) and $10\,000$ (right), of the total Sobol indices for the flood model (cost output). Horizontal red bars indicate the true values.
}
    \label{fig: ChaosOutput2}
\end{figure}

\appendix

\section*{Acknowledgement}
This research was supported by the ANR LabEx CIMI \textit{Global sensitivity analysis and Poincar\'e inequalities} (grant ANR-11-LABX-0040) within the French State Programme ``Investissements d'Avenir''.

\section{}
\label{Computations of weights adapted for linear functions}
This short appendix presents the various computations of the weight $\weightId$ for each example listed in Table \ref{table: weights}. We deal only with the truncated measures, as the weights in the non truncated case are immediately recovered by letting the truncation boundaries tend to infinity. Given some probability measure $\mu\in \probClass$, recall that by Theorem \ref{teo: weight} the weight $\weightId$ is given by 
$$
\weightId (x) = \frac{1}{\rho(x)} \int_{a}^x \left( m - y \right)  \, \rho(y)\,dy, \quad x\in [a,b], 
$$
where $m$ denotes the mean $m=\int_a^b z\,\rho(z)\,dz$ and $\rho$ is the Lebesgue density of the measure $\mu$. 

\addtocontents{toc}{\protect\setcounter{tocdepth}{1}}
\mathsubsection{Uniform distribution}{\mathcal{U}(a,b)}
For all $x\in [a,b]$,
\begin{equation*}
\scaleobj{.95}{
    \weightId(x) = \frac{a+b}{2}\int_a^x dy-\int_a^x y\,dy 
= \frac{1}{2}(x-a)(b-x).
}
\end{equation*}

\mathsubsection{Truncated exponential distribution}{\mathcal{E}(\gamma)|_{[0,h]}}
In this case the mean is
$$
m = \left(\int_0^h  \gamma\, e^{-\gamma z}dz\right)^{-1}\int_0^h \gamma\, z\, e^{-\gamma z} dz = \frac{1}{\gamma} -\frac{h e^{-\gamma h}}{1-e^{-\gamma h}},
$$
and for all $x\in [0,h]$,
\begin{align*}
    \weightId(x) & = e^{\gamma x}\left(\left( \frac{1}{\gamma} -\frac{h\, e^{-\gamma h}}{1-e^{-\gamma h}}\right)\int_0^x e^{-\gamma y}dy-\int_0^x y\, e^{-\gamma y}dy\right)\\
    & = e^{\gamma x}\left(\left(\frac{1}{\gamma} -\frac{h\, e^{-\gamma h}}{1-e^{-\gamma h}} \right)\frac{1}{\gamma}\left(1-e^{-\gamma x}\right)+\frac{1}{\gamma}x\,e^{-\gamma x}-\frac{1}{\gamma^2}\left(1-e^{-\gamma x}\right)\right)\\
    & = \frac{1}{\gamma}\left(x-h \, \frac{e^{\gamma x}-1}{e^{\gamma h}-1}\right).
\end{align*}
\mathsubsection{Truncated normal distribution}{\mathcal{N}(m,\sigma^2)|_{[m-h,m+h]}}
For all $x\in [m-h,m+h]$,
\begin{align*}
    \weightId(x) & = e^{(x-m)^2/2\sigma^2} \int_{m-h}^x (m-y) \, e^{-(y-m)^2/2\sigma^2} dy\\
    & = \sigma^2 \, e^{(x-m)^2/2\sigma^2}\left(e^{-(x-m)^2/ 2\sigma^2} - e^{-h^2/2\sigma^2} \right) \\
    & = \sigma^2 \left(1-e^{((x-m)^2-h^2)/ 2\sigma^2}\right).
\end{align*}
\mathsubsection{Truncated generalized Cauchy distribution}{\mathcal{C} (\beta)|_{[-h,h]}}
We have $m=0$ and for all $x\in [-h,h]$,
    \begin{align*}
    \weightId(x)&=-(1+x^2)^{\beta}\int_{-h}^x y\,(1+y^2)^{-\beta}\,dy\\
    &=\left\{\begin{array}{ll}\displaystyle
        \frac{1}{2(\beta-1)}\left(\left(1+x^2\right)-(1+x^2)^{\beta}(1+h^2)^{-\beta+1}\right)
         & \mbox{if }\beta\neq 1; \\
        \displaystyle -\frac{1}{2}(1+x^2)\log\left(\frac{1+x^2}{1+h^2}\right)
        & \mbox{if }\beta=1.
    \end{array}\right.
\end{align*}
\mathsubsection{Truncated Pareto distribution}{\mathcal{P} ar (z,\alpha)|_{[z,z+h]}}
$\bullet$ Case $\alpha\neq 1$: the mean is
\begin{align*}
    m = \frac{\alpha}{z^{-\alpha}-(z+h)^{-\alpha}}\int_z^{z+h} y^{-\alpha}dy = \frac{\alpha}{\alpha-1} \, \frac{z^{1-\alpha}-(z+h)^{1-\alpha}}{z^{-\alpha}-(z+h)^{-\alpha}}, 
\end{align*}
and for all $x\in [z,z+h]$,
\begin{align*}
    \weightId(x) &= x^{\alpha+1}\left(m\int_z^x y^{-(\alpha+1)}dy-\int_z^x y^{-\alpha}dy\right)\\
    & = x^{\alpha+1}\left(\frac{m}{\alpha}\left(z^{-\alpha}-x^{-\alpha}\right)-\frac{1}{\alpha-1}(z^{1-\alpha}-x^{1-\alpha})\right)\\
    & = \frac{x^{\alpha+1}}{\alpha-1} \left(\frac{z^{1-\alpha}-(z+h)^{1-\alpha}}{z^{-\alpha}-(z+h)^{-\alpha}}\left(z^{-\alpha}-x^{-\alpha}\right)-\left(z^{1-\alpha}-x^{1-\alpha}\right)\right).
\end{align*}
\smallskip 

$\bullet$ Case $\alpha=1$: we have $m=(\log(z+h)-\log(z))/(z^{-1}-(z+h)^{-1})$ and for all $x\in [z,z+h]$, 
 \begin{align*}
     \weightId(x)&=x^2\left(m\int_z^x y^{-2}dy-\int_z^x y^{-1}dy\right)\\
     &=x^2\left((\log(z+h)-\log(z)) \, \frac{z^{-1}-x^{-1}}{z^{-1}-(z+h)^{-1}}-(\log(x)-\log(z))\right).
 \end{align*}

\bigskip
\noindent
(D. Heredia, corresponding author) UMR CNRS 5219,  \textsc{Institut de Math\'ematiques de
Toulouse, Universit\'e de Toulouse, France}\par
\textit{E-mail address:} \href{mailto:dheredia@insa-toulouse.fr}{mailto:dheredia(at)insa-toulouse.fr}\par
\textit{URL:} \url{https://davidherediag.wordpress.com/}\\ \\
(A. Joulin) UMR CNRS 5219, \textsc{Institut de Math\'ematiques de
Toulouse, Universit\'e de Toulouse, France}\par
\textit{E-mail address:} \href{mailto:ajoulin@insa-toulouse.fr}{mailto:ajoulin(at)insa-toulouse.fr}\par
\textit{URL:} \url{https://perso.math.univ-toulouse.fr/joulin/}
\\ \\
(O. Roustant) UMR CNRS 5219, \textsc{Institut de Math\'ematiques de
Toulouse, Universit\'e de Toulouse; INSA Toulouse, France.}\par
\textit{E-mail address:} \href{mailto:roustant@insa-toulouse.fr}{mailto:roustant(at)insa-toulouse.fr}\par
\textit{URL:} \url{https://olivier-roustant.fr/}

\end{document}